\newcommand{\no}[1]{#1}
\renewcommand{\no}[1]{}  \newcommand{\upDelta}{\Delta} 
\renewcommand{\Delta}{\upDelta}
\date{\today}
\newtheorem{theorem}{Theorem}[section]
\newtheorem{proposition}{Proposition}[section]
\theoremstyle{remark}
\newtheorem{example}{Example}
\theoremstyle{definition}
\newtheorem{remark}{Remark}[section]
\DeclareMathOperator{\supp}{supp}
\DeclareMathOperator{\singsupp}{singsupp}
\DeclareMathOperator{\WF}{WF}
\DeclareMathOperator{\art}{\mathcal{M}}
\newcommand{\eps}{\varepsilon}
\newcommand{\R}{{\bf R}}
\newcommand{\Id}{\mbox{Id}}
\renewcommand{\r}[1]{(\ref{#1})}
\newcommand{\PDO}{$\Psi$DO}
\newcommand{\be}[1]{\begin{equation}\label{#1}}
\newcommand{\ee}{\end{equation}}
\renewcommand{\d}{\mathrm{d}}
\renewcommand{\i}{\mathrm{i}}
\newcommand{\bo}{\partial \Omega}
\title[SAR]{Is a curved flight path in SAR better than a straight one?}
\author[P. Stefanov]{Plamen Stefanov}
\address{Department of Mathematics, Purdue University, West Lafayette, IN 47907}
\thanks{First author partly supported by a NSF Grant DMS-0800428}
\author[G. Uhlmann]{Gunther Uhlmann}
\address{Department of Mathematics, University of Washington, Seattle, WA 98195, and UC Irvine, CA 92697}
\thanks{Second author partly supported by  NSF, a Senior Clay Award and Chancellor Professorship at UC Berkeley}
\begin{document}
\begin{abstract}
In the plane, we study the transform $R_\gamma f$ of integrating a unknown function $f$ over circles centered at a given curve $\gamma$. This is a simplified model of SAR, when the radar is not directed but has other applications, like thermoacoustic tomography, for example. We study the problem of recovering the wave front set $\WF(f)$. If the visible singularities of $f$ hit $\gamma$ once, we show that the ``artifacts'' cannot be resolved. If $\gamma$ is a closed curve, we show that this is still true. On the other hand, if $f$ is known a priori to have singularities in a compact set, then we show that one can recover $\WF(f)$, and moreover, this can be done in a simple explicit way, using backpropagation for the wave equation. 
\end{abstract}

\maketitle

\section{Introduction} 
In Synthetic Aperture Radar (SAR) imaging a plane flies along a curve in $\R^3$ and collects data from the surface, that we consider flat in this paper.  A simplified model of this is to project the curve on the plane, call it $\gamma$; then the data are integrals of a unknown density function on the surface over circles with various radii centered at the curve. Then the model is the inversion of the circular transform 
\be{1.1}
R_\gamma f(r,p) = \int_{|x-p|=r} f(x)\, \d \ell(x),  \quad p\in \gamma, \; r\ge 0,
\ee
where $\d \ell(x)$ is the Euclidean arc-length measure, and the center $p$ is restricted to a given curve $\gamma(t)$. This transform has been studied extensively; injectivity sets for $R_\gamma$ on $C_0^\infty$ have been described in full \cite{AgranovskiQ96},  see also \cite{GailK05}. In particular, each non-flat curve, does not matter how small, is enough for uniqueness. In view of the direct relation to the wave equation, this transform, and its 3 dimensional analog, see section~\ref{sec_3D}, have been studies extensively as well and in particular in thermoacoustic tomography with constant acoustic  speed, see, e.g., \cite{AgrKuchKun2008, ArgKuchment07, Gaik_2010,  FinchHR_07, finchPR,   Finch_Rakesh_2006, FinchRakesh08,  Haltmeier05, KuchmentKun08, Patch04}. A related transform is studied in \cite{FeleaAKQ,GaikKQ}.

\begin{figure}[h] 
  \centering
  \includegraphics[bb=0 -1 561 369,width=2.9in,height=1.91in,keepaspectratio]{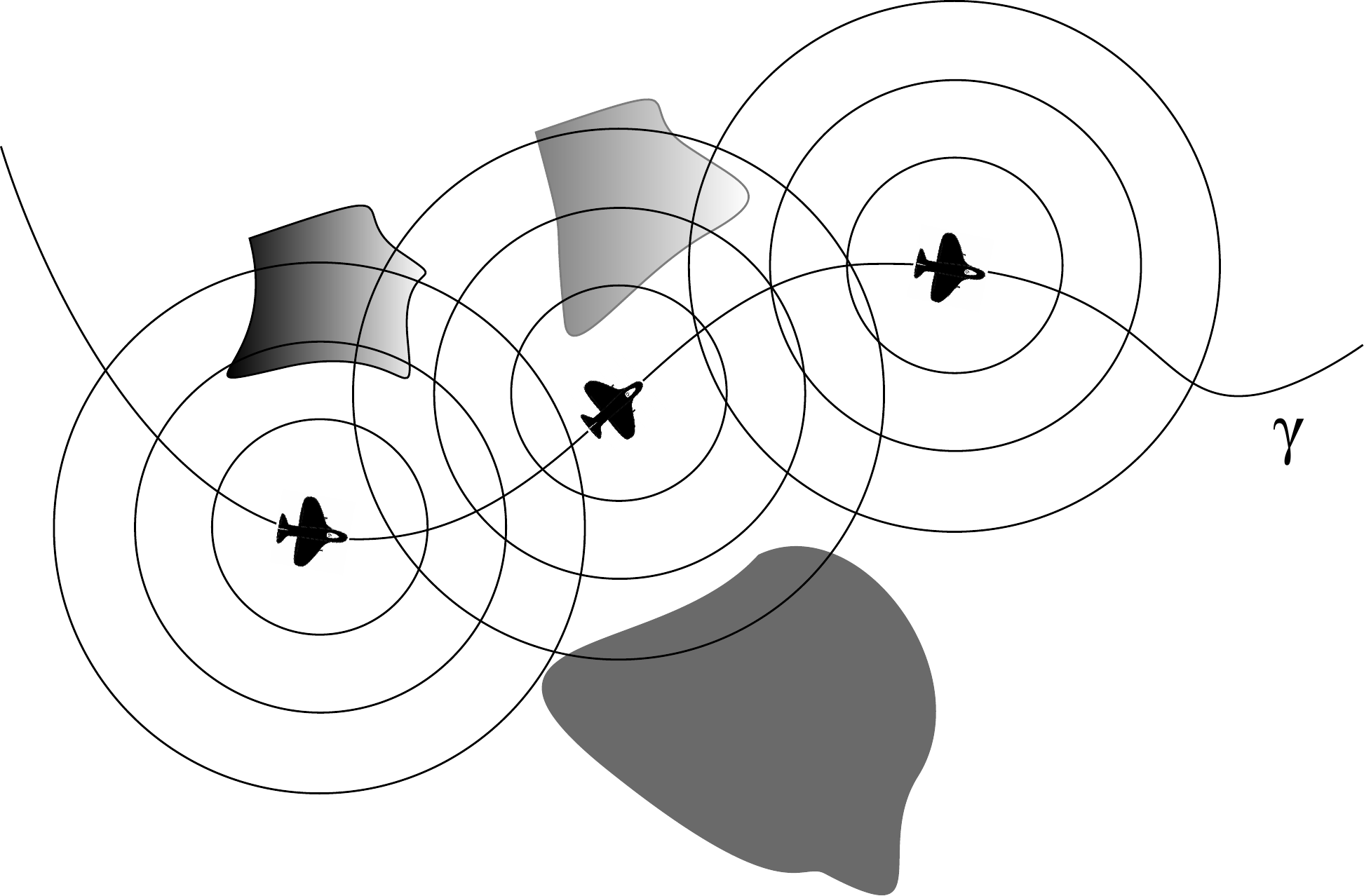}
 \caption{A plane surveying a flat surface}  \label{fig:sar1}
\end{figure}

The problem we study is the following: what part of the wave front set $\WF(f)$ can we recover? Clearly, we can only hope to recover the \textit{visible} singularities: those  conormal to  the circles involved in the transform, see also section~\ref{sec_kernel}. 

If $\gamma$ is a straight line, there is obvious non-uniqueness due to symmetry. Moreover, we can have cancellation of singularities symmetric about that line. More precisely, we can recover the singularities of the even part of $f$ and cannot recover those of the odd part. 

Based on this example, it has been suggested that a curved trajectory $\gamma$ might be a batter flight path. This question has been studied in \cite{NolanC2003}, and some numerical examples have been presented suggesting that when the curvature of $\gamma$ is non-zero, the artifacts are ``weaker'', and with increase of the curvature, they become even weaker. By artifacts, they mean  singularities in the wave front set of $R_\gamma^*R_\gamma f$ that are not in $\WF(f)$ located at \textit{mirror points}, see Figure~\ref{fig:sar2}. 
The same problem but formulated in terms of the wave equation model problem has been studied from a point of view of FIOs in \cite{ Nolan_Ch_04}, see also \cite{Felea07}, where the artifacts have been explained in terms of the Lagrangian of $R_\gamma$. They found that the artifacts are of the same strength, as an order of the corresponding FIO. More precisely, this is true at least away from the set of measure zero consisting of the points whose projections to the base falls on $\gamma$ (points right below the plane's path, i.e., $r=0$), and for $(x,\xi)$ such that the line trough it is tangent to $\gamma$ at some point. The latter set is responsible for existence of a submanifold of the Lagrangian near which the left and right projections are not diffeomorphisms. 
What part of the singularities of $f$ can be recovered however has not been studied, except for the cases when there is an amplitude which vanishes at the mirror points; then the artifacts can be ruled out by a prior knowledge. 

The main purpose of this paper is  two fold. First, we study the local problem --- what can be said about $\WF(f)$ knowing $\WF(R_\gamma)$ near some point, which localizes possible singularities of $f$ near two mirror points. More generally, we can assume that each line through $\WF(f)$ crosses $\gamma$ once, transversely. Then we show in Theorem~\ref{thm1} that curved trajectories $\gamma$ are no better than straight lines --- singularities can still cancel; moreover, the artifacts are unitary images of the original. We also describe microlocally the kernel of $R_\gamma $ modulo $C^\infty$. For simplicity, we stay away from the measure zero  set mentioned above. While this can be generalized globally for arbitrary curves, without the single intersection condition, we do not do this but study a closed curve encompassing a strictly convex domain. Then we show again that recovery of singularities is not possible. \textit{In this sense}, a curved or even a closed path is no better than a straight one. 

On the other hand, when $\gamma$ is closed and strictly convex, if we know a priori that $\WF(f)$ lies over a compact set (i.e., the projection of $\WF(f)$ onto the $x$-space is in a fixed compact set), then we show in Theorem~\ref{thm3} that one can recover the visible singularities. We even present a simple way to do that in the interior of $\gamma$, by backprojecting boundary data for the wave equation, see Proposition~\ref{pr_parametrix}.  In this sense, a curved trajectory is better. The effect which makes it possible is based on the fact that any singularity inside should be canceled by two outside if we see no singularities on the boundary; but the latter should be canceled by other singularities farther away, etc. At some point, this sequence would leave the compact set over which $\WF(f)$ lies a priori, thus contradicting the assumption on $f$. 

This transform belongs to the class of the X ray transforms with conjugate point studied by the authors in \cite{SU-caustics}. The circle centered at $\gamma$ and passing through $x$ in the direction $\theta$ has a conjugate point at the mirror image of $(x,\theta^\perp)$. The approach which we follow here is  different however.

\section{Main Results} 
Fix a smooth non self-intersecting curve  $(s_1,s_2)\ni s\mapsto \gamma(s)$.  For convenience, assume that $s$ is an arc-length parameter. 
We parameterize $R_\gamma f$ then by $s$ and the radius $r>0$, so we write $R_\gamma f(r,s)$ instead of $R_\gamma f(r,\gamma(s))$, compare with \r{1.1}.  
The \textit{possible} obstruction to recovery of singularities is well understood. Fix an orientation along $\gamma$ by choosing the normal field $\dot\gamma^\perp := (-\dot\gamma_2,\dot\gamma_1)$. This defines a ``Left" and a ``Right'' side of $\gamma$ near $\gamma$.  
Let $(x_L,\xi_L)\in T^*\R^2\setminus 0$, and $x_L\not\in \gamma$. Assume that the line through $(x_L,\xi_L)$ intersects $\gamma$ form the left, at some point $p_0=\gamma(s_0)$, and that this intersection is transversal.  If it is tangent, then the Lagrangian of $R_\gamma $ is not of a graph type, see e.g.\ \cite{Nolan_Ch_04}. We call such a singularity \textit{visible} from $\gamma$. We want to emphasize now that \textit{visible} does not necessarily mean  recoverable from $R_\gamma f$, which is the whole point of this paper.
Let $x_R$ be the point symmetric to $x_L$ about the line tangent to $\gamma$ at $p$ (a ``mirror'' point w.r.t. $p$), and let $\xi_R$ be the symmetric image of $\xi_L$, see Figure~\ref{fig:sar2}. Note that $\xi_{L}$, $\xi_R$  may both point towards $\gamma$, or both point  away from it. 
Then $(x_L, \xi_L)$ and $(x_R, \xi_R)$ are symmetric images to each other w.r.t.\ the symmetry about that tangent line lifted to the cotangent bundle. Denote this symmetry map by $\mathcal{C}$, i.e., $\mathcal{C} (x_L,\pm \xi_L) = (x_R,\pm \xi_R)$. 

Set $t_0 = |x_L-\gamma(s_0)|=|x_R-\gamma(s_0)|$.  The circular transform $R_\gamma f(r,s)$, for $(t,s)$ close to $(t_0,s_0)$ and  acting on a function $f$ supported in a small neighborhood of  $x_L$ and $y_L$ can only detect singularities close to $\pm\xi_L$ and $\pm \xi_R$  respectively, see section~\ref{sec_kernel}, but it is not clear if it can distinguish between them. We can expect that a singularity at $(x_L, \pm \xi_L)$   \textit{might} be cancelled by a singularity  at $(x_R, \pm\xi_R)$ and we \textit{might} not be able to resolve the visible singularities. 

\begin{figure}[h] 
  \centering
  \includegraphics[bb=0 -1 243 275,width=1.75in,height=1.99in,keepaspectratio]{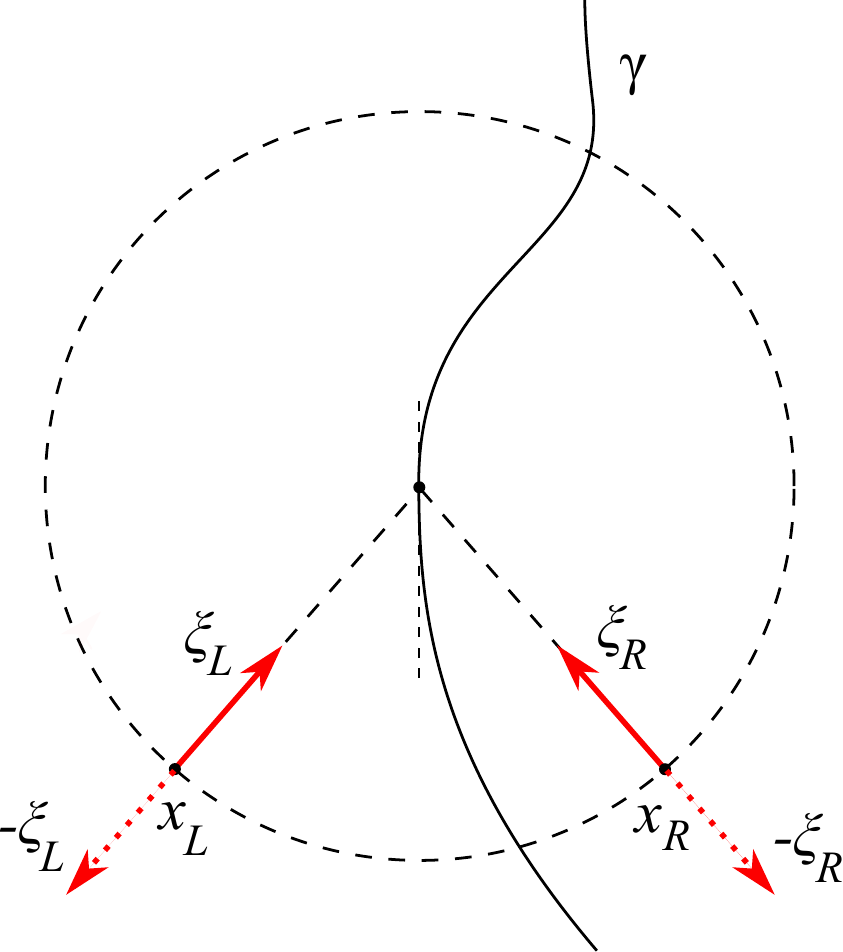}
  \caption{Mirror points: $(x_R, \pm\xi_R)$ are mirror points to $(x_L, \pm\xi_L)$, and vice versa  }
  \label{fig:sar2}
\end{figure}

Any open conic set in $T^*\R^2\setminus 0$ satisfying the  assumptions so far (also implied by the assumption below), can be written naturally as the union $\Sigma_L\cup \Sigma_R$ of two sets satisfying 
\be{A1}
\begin{split}
&\text{For any $(x,\xi)\in  \Sigma_{L}$ (or $\Sigma_R$), the line through $(x,\xi)$ hits $\gamma$ }\\ &\text{transversely from the left (right), at exactly one point different from $x$.}
\end{split}
\ee
Condition \r{A1} implies that $\Sigma_L$, $\Sigma_R$ are unions of disjoint open sets: $\Sigma_L = \Sigma_L^+ \cup \Sigma_L^-$, $\Sigma_R = \Sigma_R^+ \cup \Sigma_R^-$ where the positive and the negative signs indicate that $x+t\xi$ hits $\gamma$ for $t>0$ and $t<0$, respectively. 
Then $\mathcal{C}: \Sigma_L^\pm\to \Sigma_R^\pm$. 
Let $f$ be a compactly supported distribution with $\WF(f)\subset \Sigma_{L}\cup\Sigma_R$. The question we study is: what can we say about $\WF(f)$, knowing $\WF(R_\gamma f)$? Since $R_\gamma $ is linear, it is enough to answer the following question: let $R_\gamma f\in C^\infty(\gamma\times\R_+)$ (or let be smooth microlocally only, in a certain conic set). What can we say about $\WF(f)$?  

Without loss of generality, we can assume that $\mathcal{C}(\Sigma_L)=\Sigma_R$. In section~\ref{sec_go} below, we show that $R_\gamma $, restricted to distributions with wave front sets in $\Sigma_{L,R}$ is an FIO associated with a canonical graph denoted by $\mathcal{C}_{L,R}$, respectively.  In particular, the projection $\pi(\mathcal{C}_L( x_L,\pm\xi_L))$ on the base is $(t_0,s_0)$, i.e., $t_0$ is the time it takes to get to $\gamma$ with unit speed, and $s_0$ corresponds to  the point $p_0$ where that line hits $\gamma$. Then we set 
\be{0.1}
\Sigma_\gamma := \mathcal{C}_L(\Sigma_L) = \mathcal{C}_R(\Sigma_R).
\ee 
The possible singularities of $R_\gamma f$ with $f$ as above can only be in $\Sigma_\gamma$.

\begin{theorem}\label{thm1} 
Assume \r{A1}. 
Let $\Sigma_{L}$, $\Sigma_R$, $\Sigma_\gamma$, $\mathcal{C}$ be as above, and let $f_L$, $f_R$ be compactly supported distributions with $\WF(f_L)\subset \Sigma_{L}$,  $\WF(f_R)\subset \Sigma_{R}$. Then there exists a unitary Fourier Integral Operator  $U$ with canonical relation   $\mathcal{C}$ so that 
\be{t1}
R_\gamma (f_L+f_R)\in C^\infty(\Sigma_\gamma) \quad \Longleftrightarrow \quad  f_R-U f_L\in C^\infty(\Sigma_R).
\ee
Moreover, $U = -\Lambda_R^{-1}\Lambda_L$, with $\Lambda_{L,R}$ described in section~\ref{sec_go} and Proposition~\ref{pr_inverse}. 
\end{theorem}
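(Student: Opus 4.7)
The plan is to read off Theorem~\ref{thm1} from the geometric-optics decomposition of $R_\gamma$ that section~\ref{sec_go} will provide. Under \r{A1} that analysis expresses $R_\gamma$, acting on distributions microsupported in $\Sigma_L\cup\Sigma_R$ and working modulo $C^\infty(\Sigma_\gamma)$, as a sum
\[
R_\gamma \equiv \Lambda_L + \Lambda_R,
\]
where $\Lambda_L$ (respectively $\Lambda_R$) is an elliptic canonical-graph FIO with canonical relation $\mathcal{C}_L\colon \Sigma_L\to\Sigma_\gamma$ (respectively $\mathcal{C}_R\colon \Sigma_R\to\Sigma_\gamma$), each obtained by multiplying $R_\gamma$ on the input side by a microlocal cutoff supported in the corresponding half. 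Proposition~\ref{pr_inverse} moreover supplies microlocal parametrices $\Lambda_L^{-1}, \Lambda_R^{-1}$.

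\textbf{Step 1: reduction to an FIO identity.} By construction $\Lambda_L f_R$ and $\Lambda_R f_L$ are smooth, since the microlocal cutoffs entering $\Lambda_L$ and $\Lambda_R$ annihilate $f_R$ and $f_L$ respectively. Hence $R_\gamma(f_L+f_R)\equiv \Lambda_L f_L+\Lambda_R f_R$ modulo $C^\infty(\Sigma_\gamma)$. Applying the parametrix $\Lambda_R^{-1}$, the left-hand side of \r{t1} is equivalent to
\[
\Lambda_R^{-1}\Lambda_L f_L + f_R\in C^\infty(\Sigma_R),
\]
which is exactly the right-hand side of \r{t1} with $U:=-\Lambda_R^{-1}\Lambda_L$. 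The converse direction reverses the argument (apply $\Lambda_R$).

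\textbf{Step 2: canonical relation of $U$.} A composition of canonical-graph FIOs is again of canonical-graph type, so $U$ has canonical relation $\mathcal{C}_R^{-1}\circ\mathcal{C}_L\colon\Sigma_L\to\Sigma_R$. From the description preceding \r{0.1}, both $\mathcal{C}_L(x_L,\xi_L)$ and $\mathcal{C}_R(x_R,\xi_R)$ record the same point $(t_0,s_0)\in\gamma\times\R_+$ together with the corresponding dual variables on $\Sigma_\gamma$, so this composition is precisely the mirror map $\mathcal{C}$.

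\textbf{Step 3: unitarity.} This is the main obstacle. The strategy is to compute the principal symbols of $\Lambda_L$ and $\Lambda_R$ from the stationary-phase expansion of $R_\gamma$ developed in section~\ref{sec_go}, and to observe that they agree at mirror-related covectors, reflecting the fact that the two tangent directions in which the circle through $x_L$ (equivalently $x_R$) meets $\supp f$ make equal angles with the tangent to $\gamma$ at $p_0$. Combined with the symbol of the parametrix $\Lambda_R^{-1}$, this forces the principal symbol of $U$ to have modulus one along $\mathcal{C}$, so that $U^*U\equiv \Id$ on $\Sigma_L$ and $UU^*\equiv \Id$ on $\Sigma_R$ modulo smoothing. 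A final adjustment of $U$ by a smoothing operator --- equivalently, picking a quantization of the symplectomorphism $\mathcal{C}$ with the prescribed principal symbol --- produces a genuinely unitary FIO representative, preserving \r{t1}. The delicate part of Step 3 is the careful bookkeeping of the half-density factors, of the phase Jacobians attached to the canonical-graph FIOs, and of the $\pm\xi$ sign conventions built into \r{A1}; but the underlying symmetry of the integration circle about the tangent line at $p_0$ is what ultimately drives the equality of symbols.
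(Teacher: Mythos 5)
Your Steps 1 and 2 are essentially the paper's argument. The only cosmetic difference is that the paper first applies the Abel-type elliptic \PDO\ $A\otimes\Id$ in the $r$ variable to convert $R_\gamma$ into the wave trace $\Lambda=\Lambda_L+\Lambda_R$, and then applies the parametrix $\Lambda_R^{-1}$ exactly as you do; since $A\otimes\Id$ is elliptic on a conic neighborhood of $\WF(\Lambda f)$, your direct microlocal splitting of $R_\gamma$ yields the same equivalence and the same operator $U=-\Lambda_R^{-1}\Lambda_L$ modulo smoothing. So the reduction and the identification of the canonical relation $\mathcal{C}=\mathcal{C}_R^{-1}\circ\mathcal{C}_L$ are fine.

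The gap is in Step 3. Even granting the (uncarried-out) symbol computation, a principal-symbol argument only shows that $\sigma_p(U^*U)=|\sigma_p(U)|^2=1$, i.e.\ that $U^*U-\Id$ has order $-1$; the theorem requires $U^*U-\Id$ and $UU^*-\Id$ to be \emph{smoothing} on $\Sigma_L$ and $\Sigma_R$, which is a statement about the full symbol to all orders. Your proposed repair cannot close this: adjusting $U$ by a smoothing operator does not change $U^*U$ modulo smoothing, while adjusting it by a non-smoothing lower-order correction (as in re-quantizing $\mathcal{C}$, or $U\mapsto U(U^*U)^{-1/2}$) destroys both the asserted identity $U=-\Lambda_R^{-1}\Lambda_L$ and the equivalence \r{t1}, since $(U-U_0)f_L$ is then merely one order smoother than $f_L$, not in $C^\infty$. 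The paper sidesteps all symbol bookkeeping with an exact energy argument: let $u_L$ be the free solution with Cauchy data $(0,f_L)$ and let $u_R$ be the incoming solution on the right of $\gamma$ with the same Dirichlet trace, so that $2\partial_t u_R|_{t=0}$ realizes $\Lambda_R^{-1}\Lambda_Lf_L$. Then $v=u_R-u_L$ solves the wave equation to the right of $\gamma$ with zero Dirichlet data, hence conserves energy; comparing $t=0$ (where $v=u_R$ modulo smooth) with $t=T\gg1$ (where $v=-u_L$ modulo smooth) gives $\|\Lambda_R^{-1}\Lambda_Lf_L\|_{L^2}=\|(\Id+K)f_L\|_{L^2}$ with $K$ smoothing, whence $U^*U=\Id$ modulo smoothing on each of $\Sigma_L^{\pm}$, and $UU^*=\Id$ follows from the microlocal invertibility of $U$. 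You need this exact conservation law (or an equivalent all-orders argument), not just the leading symbol symmetry, to obtain the unitarity claimed in the theorem.
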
 

The unitarity of $U$ above is considered in microlocal sense: $U^*U-\Id$ an $UU^*-\Id$ are smoothing in $\Sigma_L$ and $\Sigma_R$, respectively, where the adjoint is taken in $L^2$ sense. 

The practical implications of Theorem~\ref{thm1} is that under assumption \r{A1}, only the singularities of $f_R-Uf_L$ (or, equivalently, $U^*\!f_R-f_L$) can be recovered. We can think of it as the ``even part'' of $f$ in this case. In particular, for any $f_L\in \mathcal{D}'$ with $\WF(f_L)\subset \Sigma_L$ there exists $f_R\in \mathcal{D}'$ with    $\WF(f_R)\subset \Sigma_R$ so that $R_\gamma (f_L+f_R)\in C^\infty(\Sigma_\gamma)$. An explicit radial example illustrating this is presented in Example~\ref{Example1}. 
Thus the artifacts when using $R_\gamma^*R_\gamma f$ to recover $\WF(f)$ are not just a problem with that particular method; they are unavoidable, and they are a unitary image of the original, i.e., ``equal'' in strength. From that point of view, a curved path  is no better than a straight one. 

We study next the case where  the path $\gamma$ and $f$ are such that there are singularities $(x,\xi)$ of $f$ for which the line through them hits $\gamma$ \textit{more than once}. Of course, this can happen for a curved path only. Consider the examples in Figure~\ref{fig:sar4}, where each of the dashed lines intersects $\gamma$ at most twice. 
We assume that there are no more intersection points than shown. 
On the left, the trace that $(x,\xi)$ leave on $\gamma$ at $p_1$ can be canceled by its mirror image $(x_1,\xi_1)$ about (the tangent at) $p_1$. Equivalently, $(x_1,\xi_1)$ can create an artifact at $(x_1,\xi_1)$, and vise-versa; related by a unitary map. Similarly, the singularity on $\gamma$ caused by  $(x,\xi)$ at $p_{-1}$ can be canceled by its mirror image $(x_{-1},\xi_{-1})$ about $p_{-1}$. We assume there that the lines through  $(x_1,\xi_1)$ and $(x_{-1},\xi_{-1})$ do not intersect $\gamma$ again. 
 If we know that one of the three singularities cannot exists, then none does. In particular, we can recover $(x,\xi)$ if we know a priori that either $(x_1,\xi_1)$, or $(x_{-1},\xi_{-1})$ cannot be in $\WF(f)$. Without any prior knowledge, we cannot. On the right, all those five singularities can cancel if they are related by suitable unitary operators. If we know that one of them cannot be in $\WF(f)$, then none can. 

Notice that $p_{-1} p p_1p_2p_3$ is a geometric optics ray reflected by  $\gamma$. To obtain $(x_2,\xi_2)$, for example, we start from $(x,\xi)$ going along the broken path, and at any point between $p_2$ and $p_3$, we go back from the same distance but along a straight line. If we go along the broken ray past $p_3$ (not shown on the picture), and come back along a line the same distance, we end up at $x_3$. The point $x_{-1}$ can be obtained similarly, going in direction opposite to $\xi$. 

So far we assumed that each line appearing in the construction intersects $\gamma$ at most twice. If this is not true, the mirror points to $(x,\xi)$ form a directed graph. We will not study this case.

Assume now that $\gamma$ is a closed curve and it encompasses a strictly convex domain $\Omega$. The discussion above suggests the following. For any $(x,\xi)\in T^*\R^2\setminus 0$, let $\Phi^t(x,\xi) = (x+t\xi/|\xi|,\xi)$. Let $\Phi_\gamma^t$ be defined on $T^*\Omega\setminus 0$ in the same way for small $|t|$, then extended by reflection, etc. At the values of $t$ corresponding to reflections, we take the limit from the left. 
We call this path, extended for all positive and negative $t$, a broken line through $(x,\xi)$. Then all mirror points of $(x,\xi)$, where possible artifacts might lie,  are given by 
\be{Art}
\art(x,\xi) = \left\{\Phi^{-t}\circ \Phi^t_\gamma(x,\xi);\; t\in\R\right\}, \quad x\in \Omega.
\ee
This is a discrete set under our  assumption, see, e.g., \cite{Lazutkin}. In the examples in Figure~\ref{fig:sar4}, this set is finite in each case, consisting of $(x,\xi)$, $(x_{\pm1},\xi_{\pm1})$, etc. Since $\gamma$ is a closed curve now, in our case it is infinite. The next theorem says that if we have a priori knowledge that would allow us to rule out at least one of those artifacts, then we can recover a singularity at $(x,\xi)$. Otherwise --- we cannot.

\begin{figure}[h] 
  \centering
  \includegraphics[bb=0 55 779 212,width=6.4in,height=1.29in,keepaspectratio]{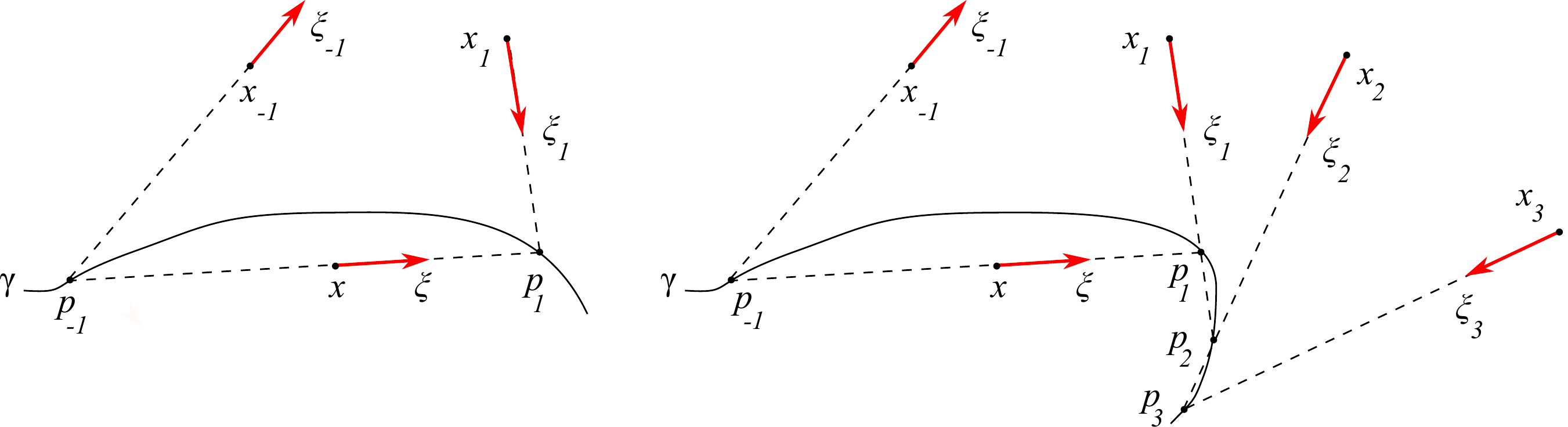}
\vspace{30pt}
  \caption{Singularities that cannot be resolved. Left: $(x,\xi)$ has mirror images  $(x_{-1},\xi_{-1})$ and $(x_1,\xi_1)$. Singularities at any two of those three points are related by unitary maps. Right: an example with more than three points. }
  \label{fig:sar4}
\end{figure}

Those arguments lead to the following ``propagation of singularities theorem''. 

\begin{theorem}\label{thm2} 
Let $\gamma=\bo$, where $\Omega\subset\R^2$ is a strictly convex domain. Let 
$f\in\mathcal{D}'(\R^2)$ with $\gamma\cap\supp f= \emptyset$, and assume that $R_\gamma f\in C^\infty$. Then for any $(x,\xi)\in T^*\Omega\setminus 0$, either $\art(x,\xi)\subset \WF(f)$  or $\art(x,\xi)\cap\WF(f)=\emptyset$.  
\end{theorem}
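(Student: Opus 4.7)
The plan is to iterate Theorem~\ref{thm1} at each reflection of the broken billiard trajectory through $(x,\xi)$. Label the successive reflection points on $\gamma$ as $\ldots,p_{-2},p_{-1},p_1,p_2,\ldots$ (going forward and backward from $(x,\xi)$) and write $(x_k,\xi_k)$ for the $k$-th mirror point in the construction \r{Art}, so that $(x_0,\xi_0)=(x,\xi)$ and $\art(x,\xi)=\{(x_k,\xi_k):k\in\mathbf{Z}\}$. A short calculation---reflection across $T_{p}\gamma$ fixes $p\in\gamma$, preserves Euclidean distances, and sends the incoming direction to its mirror---shows that any two consecutive mirror points $(x_k,\xi_k)$ and $(x_{k+1},\xi_{k+1})$ are themselves mirror images across the tangent line at the intervening reflection point, and that they approach $\gamma$ there from opposite sides. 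Thus they sit in $\Sigma_L$ and $\Sigma_R$ respectively for a suitably small arc of $\gamma$.

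I would then apply Theorem~\ref{thm1} to each neighboring pair. Pick a small open arc $\gamma^\circ\subset\gamma$ about the reflection point $p_{k+1}$ containing no other $p_j$, and conic neighborhoods $V_k\ni(x_k,\xi_k)$, $V_{k+1}\ni(x_{k+1},\xi_{k+1})$ small enough that \r{A1} is satisfied for $\Sigma_L:=V_k$, $\Sigma_R:=V_{k+1}$ \emph{with respect to} $\gamma^\circ$. Decompose $f=f_L+f_R+g$ via \PDO\ cutoffs with $\WF(f_L)\subset V_k$, $\WF(f_R)\subset V_{k+1}$, and $\WF(g)$ disjoint from $V_k\cup V_{k+1}$. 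Since the canonical relation of $R_\gamma$ is a two-sheeted graph over $\Sigma_\gamma$ associating to each microlocal point precisely the corresponding pair of mirror points (away from the tangential set excluded throughout the paper), no singularity of $g$ projects to the microlocal image of $p_{k+1}$. Hence $R_\gamma f\in C^\infty$ forces $R_{\gamma^\circ}(f_L+f_R)\in C^\infty$ on the relevant piece of $\Sigma_\gamma$, and Theorem~\ref{thm1} then yields $f_R-U_k f_L\in C^\infty$ in $V_{k+1}$, with $U_k$ unitary and therefore microlocally elliptic. Ellipticity of $U_k$ propagates singularities in both directions:
\[
(x_k,\xi_k)\in\WF(f)\ \Longleftrightarrow\ (x_{k+1},\xi_{k+1})\in\WF(f).
\]

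Iterating this equivalence over all $k$ carries a singularity of $f$ at any one point of $\art(x,\xi)$ to every other point, which is exactly the asserted dichotomy. The main technical hurdle will be the microlocal bookkeeping underlying the cutoff decomposition: one must verify that, after restricting both $f$ and $\gamma$ to small pieces, the only contributions to $\WF(R_{\gamma^\circ}(f_L+f_R))$ near the image of $p_{k+1}$ come from the intended mirror pair. This is guaranteed, away from the tangential measure-zero set, by the canonical-graph description recalled in section~\ref{sec_go}, together with the discreteness of $\art(x,\xi)$ in $T^*\R^2\setminus 0$ (the spatial positions $x_k$ drift to infinity linearly in $|k|$, so the $V_k$ can be made pairwise disjoint), and with the hypothesis $\gamma\cap\supp f=\emptyset$, which keeps $\WF(f)$ away from $\gamma$ so that every mirror construction is well defined.
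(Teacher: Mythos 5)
Your argument is correct and is essentially the paper's own proof: the paper likewise observes that the trace a singularity leaves on $T^*(\R_+\times\gamma)$ over each reflection point can only be cancelled by the unique mirror point on the other side (by the two-to-one canonical relation computed in section~\ref{sec_kernel}), invokes Theorem~\ref{thm1} to relate the two microlocally by a unitary (hence elliptic) FIO, and iterates along the broken ray in both directions to propagate the dichotomy through all of $\art(x,\xi)$. The only cosmetic difference is that you package both implications into a single equivalence per reflection, whereas the paper argues the two inclusions of the dichotomy separately.
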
 
As in the example above, if we know a priori that one of those points cannot be in $\WF(f)$, then none is, and in particular, $f$ is smooth at $(x,\xi)$. One such case is when $\WF(f)$  a priori lies over a fixed compact set. 

\begin{theorem}\label{thm3} 
Let $\gamma$ be as in Theorem~\ref{thm2}. 
Let $f\in\mathcal{E}'(\R^2)$. If $R_\gamma f\in C^\infty$, then $\WF(f)$ contains no singularities visible from $\gamma$, and in particular,  $f|_\Omega\in C^\infty$. Moreover, $f|_\Omega$ can be obtained from $R_\gamma f$ modulo $C^\infty$ by the back-projection operator described in Proposition~\ref{pr_parametrix}. 
\end{theorem}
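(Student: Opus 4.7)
The plan is to deduce the microlocal smoothness claim from Theorem~\ref{thm2} together with the compactness of $\supp f$, and to justify the back-projection formula via a time-reversal parametrix for the wave equation.

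For the microlocal claim, fix $(x,\xi)\in T^*\Omega\setminus 0$. Strict convexity of $\Omega$ guarantees that the line through $(x,\xi)$ meets $\gamma$ transversely, so $(x,\xi)$ is visible. After localizing $f$ by a smooth cutoff $\chi$ equal to $1$ near $x$ and compactly supported in $\Omega$ (so that $\chi f$ satisfies the support hypothesis of Theorem~\ref{thm2} and agrees with $f$ microlocally near $(x,\xi)$), Theorem~\ref{thm2} yields the dichotomy $\art(x,\xi)\subset\WF(f)$ or $\art(x,\xi)\cap\WF(f)=\emptyset$. The first alternative is excluded because $\art(x,\xi)$ is unbounded in the base: writing $\Phi_\gamma^t(x,\xi)=(y_t,\eta_t)$ with $y_t\in\overline\Omega$ and $|\eta_t|=|\xi|$,
\[
\Phi^{-t}\!\circ\Phi_\gamma^t(x,\xi)=\bigl(y_t-t\,\eta_t/|\eta_t|,\ \eta_t\bigr),
\]
whose base coordinate has norm at least $t-\max_{y\in\overline\Omega}|y|$, so it escapes every compact set as $t\to\infty$. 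Since $f\in\mathcal{E}'$, we must have $(x,\xi)\notin\WF(f)$, and thus $f|_\Omega\in C^\infty$. Visible singularities with base outside $\overline\Omega$ are handled by an analogous argument applied to the broken-orbit extended across each transversal crossing of $\gamma$; the interior conclusion just obtained rules out mirror points that happen to land inside $\Omega$, and the remaining exterior orbit again escapes every compact set.

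For the reconstruction, let $u(t,x)$ solve $(\partial_t^2-\Delta)u=0$ in $\R^2\times\R$ with Cauchy data $(u,\partial_t u)|_{t=0}=(f,0)$. The two-dimensional Poisson formula
\[
u(t,p)=\frac{1}{2\pi}\,\partial_t\!\int_0^t \frac{R_\gamma f(r,p)}{\sqrt{t^2-r^2}}\,\d r
\]
recovers $u|_{\gamma\times\R_+}$ explicitly from $R_\gamma f$ via an Abel-type transform in $r$. Fix $T$ larger than the diameter of $\Omega$ and let $v(t,x)$ solve the backward-in-time Dirichlet problem $(\partial_t^2-\Delta)v=0$ in $(0,T)\times\Omega$ with $v|_{\gamma\times(0,T)}=u|_{\gamma\times(0,T)}$ and terminal data $v(T,\cdot)=\partial_t v(T,\cdot)=0$. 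The back-projection operator of Proposition~\ref{pr_parametrix} sends $R_\gamma f\mapsto v(0,\cdot)$, and the claim to prove is $v(0,\cdot)\equiv f|_\Omega$ modulo $C^\infty(\Omega)$.

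The main obstacle is the microlocal verification that this composition is the identity on $T^*\Omega\setminus 0$. Strict convexity of $\gamma$ is crucial: every null bicharacteristic of $\partial_t^2-\Delta$ issuing from $T^*\Omega\setminus 0$ hits $\gamma\times(0,T)$ transversely, without grazing and without conjugate points at $\gamma$. Consequently the map from Cauchy data $(f,0)$ to Dirichlet trace on $\gamma\times(0,T)$ is an elliptic Fourier integral operator associated with a canonical graph, and the terminal-value Dirichlet problem above realizes its left parametrix modulo smoothing. The remaining task is a principal-symbol transport calculation along the bicharacteristics, normalizing the amplitude so that the composition equals the identity to leading order, followed by the standard iterative correction in lower orders. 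Once the parametrix is in place, specializing to $R_\gamma f\in C^\infty$ gives $u|_\gamma\in C^\infty$, hence $v(0,\cdot)\in C^\infty(\Omega)$, confirming $f|_\Omega\in C^\infty$ by a second route.
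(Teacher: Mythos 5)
Your first half follows the paper's own route: the key point, that $\art(x,\xi)$ escapes every compact set because $|\pi\circ\Phi^{-t}\circ\Phi^t_\gamma(x,\xi)|\ge |t|-\max_{\bar\Omega}|y|$, is exactly the paper's argument, and combined with the dichotomy of Theorem~\ref{thm2} and $f\in\mathcal{E}'$ it does the job. However, your localization step is incoherent as written: Theorem~\ref{thm2} has the hypothesis $R_\gamma f\in C^\infty$, and replacing $f$ by $\chi f$ destroys that hypothesis ($R_\gamma(\chi f)$ need not be smooth when $R_\gamma f$ is), so you cannot invoke Theorem~\ref{thm2} for $\chi f$; moreover the dichotomy you would obtain would concern $\WF(\chi f)$, which is vacuous at the distant mirror points where the unboundedness argument lives, so it gives no information about $\WF(f)$ there. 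The theorem should simply be applied to $f$ itself (the support condition $\gamma\cap\supp f=\emptyset$ is the one hypothesis to address, and a spatial cutoff is not the way to do it). Your treatment of visible singularities based outside $\bar\Omega$ is also the paper's: such a point lies in $\art(x,\xi)$ for an interior $(x,\xi)$ obtained by sliding along the line into $\Omega$.

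The reconstruction half has three concrete problems. First, $T>\operatorname{diam}(\Omega)$ is not enough: since $f\in\mathcal{E}'(\R^2)$ may be singular far outside $\Omega$, those exterior singularities can be passing through $\bar\Omega$ at time $\operatorname{diam}(\Omega)$; one needs $T>\max\{|x-y|:x\in\bo,\ y\in\singsupp f\}$ as in \r{1.7}, so that $u(T,\cdot)$ is smooth in $\Omega$. Second, you impose $v(T,\cdot)=\partial_tv(T,\cdot)=0$ together with Dirichlet data $u|_{\gamma\times(0,T)}$ that does not vanish at $t=T$; this incompatibility at the corner $\{T\}\times\gamma$ creates new singularities that propagate into $\Omega$ and can reach $t=0$. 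The paper removes it by multiplying the boundary data by a cutoff $\chi(t)$ vanishing near $t=T$ and equal to $1$ on $[0,T_0]$ with $T_0$ still satisfying \r{1.7}. Third, and most importantly, you defer the actual proof that back-projection inverts the forward map to ``a principal-symbol transport calculation along the bicharacteristics \dots followed by the standard iterative correction in lower orders,'' which you do not perform. No such FIO calculus is needed: the paper's argument is exact time reversal. With the correct $T$ and cutoff, $w:=u-v$ solves the wave equation in $[0,T]\times\Omega$ with terminal data in $C^\infty(\Omega)$ and boundary data $(1-\chi)u|_{[0,T]\times\bo}$, which is smooth because $\supp(1-\chi)\subset\{t>T_0\}$ and all singularities have left by then; hence $w$ is smooth and $f-Gh=\partial_tw|_{t=0}\in C^\infty(\Omega)$. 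Replacing your symbol computation by this observation both closes the gap and makes the proof elementary.
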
 

If we do not have a priori information about $f$, then $\WF(f)$ cannot be reconstructed. 

\begin{theorem}\label{thm4}
Let $\gamma$ be as in Theorem~\ref{thm2}. Then there is $f\in \mathcal{D}'(\R^2\setminus\gamma)\setminus C^\infty$  so that $R_\gamma f\in C^\infty(\R_+ \times\gamma)$. Moreover, for any $f$ with $\singsupp f\subset \Omega$, there is $g$ with $\singsupp g\subset\R^2\setminus\Omega$ so that $R_\gamma (f-g)\in C^\infty(\R_+\times\gamma)$.

\end{theorem}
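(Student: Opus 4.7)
The plan for Theorem~\ref{thm4} is to prove both statements by an explicit construction that propagates singularities along the mirror orbit, iterating Theorem~\ref{thm1} pairwise at each reflection point. The underlying idea is that Theorem~\ref{thm1} provides a microlocal cancellation mechanism for pairs of mirror singularities, and by chaining these cancellations along the full billiard orbit we can build (for part~a) or complete (for part~b) a distribution whose $R_\gamma$-image is smooth while the distribution itself is not.

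For part (a), I would fix a single $(x_0,\xi_0)\in T^*\Omega\setminus 0$ and enumerate its mirror orbit $\{(x_k,\xi_k)\}_{k\in\mathbb{Z}}$ via \r{Art}; for $k\neq 0$ each $x_k$ lies strictly outside $\bar\Omega$, and the line through $(x_k,\xi_k)$ meets $\gamma$ precisely at the two consecutive billiard reflection points $q_k,q_{k+1}$. Start with a compactly supported conormal distribution $f_0$ with $\WF(f_0)$ in a small conic neighborhood of $(x_0,\xi_0)$. Inductively define $f_k$ to be a compactly supported distribution supported near $x_k$ whose microlocal profile at $(x_k,\xi_k)$ matches $U_k f_{k-1}$ modulo $C^\infty$, where $U_k$ is the unitary FIO of Theorem~\ref{thm1} at $q_k$ (and analogously for $k<0$). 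Because $|x_k|\to\infty$ without accumulation for a closed strictly convex $\gamma$, the formal series $f:=\sum_{k\in\mathbb{Z}}f_k$ is locally finite and defines an element of $\mathcal{D}'(\R^2\setminus\gamma)$. By construction the contributions of $f_{k-1}$ and $f_k$ cancel at the branch of $\Sigma_\gamma$ over each $q_k$, so $R_\gamma f\in C^\infty$, while $(x_0,\xi_0)\in\WF(f)$ ensures $f\notin C^\infty$.

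For part (b), the approach is parallel. I would use a microlocal partition of unity to decompose $f=\sum_\alpha f^\alpha$ into pieces microlocalized near base points in $\singsupp f\subset\Omega$. For each piece concentrated at some $(x_0^\alpha,\xi_0^\alpha)$, generate the mirror orbit $(x_k^\alpha,\xi_k^\alpha)_{k\neq 0}$ (all outside $\bar\Omega$) and define $g^\alpha=\sum_{k\neq 0}g_k^\alpha$ by the same iterative recipe, taking $g_1^\alpha\equiv -U_1^\alpha f^\alpha$ and $g_{k+1}^\alpha\equiv U_{k+1}^\alpha g_k^\alpha$ modulo $C^\infty$ (analogously for $k<0$). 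Each $g_k^\alpha$ is supported near $x_k^\alpha\notin\bar\Omega$, so the resulting $g=\sum_\alpha g^\alpha$ satisfies $\singsupp g\subset\R^2\setminus\Omega$; the pairwise cancellations at each $q_k^\alpha$ yield $R_\gamma(f-g)\in C^\infty$.

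The main obstacle I anticipate is reconciling the single-intersection hypothesis~\r{A1} of Theorem~\ref{thm1} with our setting, where each $f_k$ (or $g_k^\alpha$) has a line meeting $\gamma$ at two points $q_k$ and $q_{k+1}$. The resolution is to apply Theorem~\ref{thm1} microlocally: in a small conic neighborhood of the branch of $\Sigma_\gamma$ over $q_k$, the only relevant mirror pair is $\{(x_{k-1},\xi_{k-1}),(x_k,\xi_k)\}$, and condition \r{A1} holds in that localization. A secondary technical point is ensuring each iterated piece can be realized as a genuine compactly supported distribution with the prescribed microlocal profile near $x_k$; this follows because $U_k$ is an FIO associated with a canonical graph, so applying $U_k$ to a conormal-type piece and truncating by a compactly supported cutoff equal to $1$ near $x_k$ preserves the desired wave front set modulo $C^\infty$.
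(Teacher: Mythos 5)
Your proposal is correct and follows essentially the same route as the paper: both iterate the unitary cancellation of Theorem~\ref{thm1} along the mirror/billiard orbit, placing at each step a new microlocal piece at the next mirror point (all of which lie outside $\bar\Omega$ and escape to infinity), applying the theorem microlocally near each branch of $\Sigma_\gamma$ to sidestep the single-intersection hypothesis, and using a microlocal partition of unity for general $f$. The only cosmetic difference is in handling the infinite sum: you observe directly that the series is locally finite because $|x_k|\to\infty$, while the paper first achieves smoothness on $(0,T)\times\gamma$ for each finite $T$ and then glues the resulting approximants, which stabilize on compact sets --- the same observation in different clothing.
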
 

The second statement of the theorem says that   we can take any $f$ singular in $\Omega$,  and extend it outside $\Omega$ so that its circular transform will be smooth on $\gamma$. Therefore, not only singularities cannot be detected  but any chosen in advance $f$ singular in $\Omega$ can be neutralized by choosing suitable extension singular outside   $\Omega$. We refer also to Example~\ref{Example1} and the remark at the end of it for a radial example.

Those problems and  the methods are related to the thermoacoustic problem with sources inside and outside $\Omega$, see Remark~\ref{remark_TAT} .

\section{Proofs} \label{sec2}

\subsection{The wave front set of the kernel of $R_\gamma$} \label{sec_kernel} 
The Schwartz kernel of $R_\gamma $ is given by
\[
\mathcal{R}(r,s,x) =\frac1r\delta_{S^1}\left(\frac{x-\gamma(s)}{r}\right).
\]
The factor $1/r$ is not singular for $r>0$, where we work. By the calculus of wave front sets, 
\[
\WF(\mathcal{R}) = \left\{  (r,s,x,\d F^t_{r,s,x}\eta);\;   x-\gamma(s)\in r S^1, \; \eta = k\frac{x-\gamma(s)}{| x-\gamma(s)   |}, \; k\not=0 \right \},
\]
where $F=(x-\gamma(s))/r$. Set $\omega = F|_{S^1}$ to write this as
\[
\WF(\mathcal{R}) = \left\{ \left(r,s,\gamma(s)+r\omega,  -\frac{k}{r} , -\frac{k}{r}  \omega\cdot\dot\gamma(s) , \frac{k}r \omega \right)  ;\;  \omega\in  S^1, r>0, 
k\not=0  \right\}.
\]
Set $\xi=-k\omega/r$; then $\omega = -\eps \xi/|\xi|$, $k/r =\eps |\xi|$, 
where $\eps=\pm1$ is the sign of $k$, to get
\be{1.W}
\WF(\mathcal{R}) = \left\{ \left(r,s,x,  -\eps|\xi| , \xi\cdot\dot\gamma(s) , -\xi\right)  ;\; x+\eps r\frac{\xi}{|\xi|}=\gamma(s),  r>0, \eps=\pm1 \right\}.
\ee
By the calculus of wave front sets, if we invert the sign of the sixth component there, $\-\xi$, and consider $\WF(\mathcal{R})$ as a relation, this tells us where $\WF(f)$ is mapped under the action of $R_\gamma $. Comparing this with the definition  \r{go1b}, \r{go1c} of $\mathcal{C}_L$, and similarly for $\mathcal{C}_R$ below, we get 
\[
\WF(R_\gamma f)\subset \mathcal{C}_L(\WF(f)) \cup \mathcal{C}_R(\WF(f))
\]
for $f$ such that for any $(x,\xi)\in \WF(f)$, the line $x+s\xi$ through $(x,\xi)$ meets $\gamma$ exactly once, for $s\not=0$. 

Let $(\tau,\sigma)$ be the dual variables to $(r,s)$. The reason we use $\tau$ instead of the more intuitive choice $\rho$ for a dual variable to $r$ is that by applying the \PDO\  $A$ below, we will transform $r$ into a variable denoted by $t$. 
 Then 
\be{1c}
\WF(R_\gamma f)\subset \left\{ |\sigma|<|\tau|      \right\}.
\ee
Moreover, by \r{A1}, for any such $f$, we have $|\sigma|<\delta|\tau|$, $\delta<1$.

\subsection{Reduction to a problem for the wave equation}
Let $u$ solve the problem 
\begin{equation}   \label{1}
\left\{
\begin{array}{rcll}
(\partial_t^2 -\Delta)u &=&0 &  \mbox{in $\R_t\times \R_x^2$},\\
u|_{t=0} &=& 0,\\ \quad \partial_t u|_{t=0}& =&f, 
\end{array}
\right.               
\end{equation}
and set $\Lambda f=u|_{\R_+\times\gamma}$, i.e.,
\be{1.2a}
\Lambda f=  \frac{\sin(t|D|)}{|D|}f \Big|_{\R_+\times\gamma}.
\ee
The well known solution formula then implies
\be{1.3}
\Lambda f(t,s) = \int_0^t\frac{r R_\gamma f(r,s)}{\sqrt{t^2-r^2}}\, \d r,\quad  p\in \gamma.
\ee
Our assumptions imply that $R_\gamma f(r,p)=0$ for $0\le r\ll1$. The integral above then it has a kernel singular at the diagonal $t=r$ only. It belongs to the class of Abel operators 
\be{1.3a}
Ah (t) = \int_0^t\frac{r h(r)}{\sqrt{t^2-r^2}}\,\d r,\quad t>0.
\ee
Then 
\be{1.3bb}
\Lambda = (A\otimes \Id) R_\gamma,
\ee
in other words, $\Lambda$ is just  $AR_\gamma $ but $A$ acts in the first variable.  The explicit left inverse of $A$ is
\be{1.4}
h(r) = Bh(r) :=  \frac{2}{\pi r}\frac{\d}{\d r} \int_0^r\frac{ h(t)}{\sqrt{r^2-t^2}}\, \d t, \quad r>0.
\ee
\begin{proposition}\label{pr_AB}
The operator $A$ restricted to $\mathcal{E}'(\R_+)$ is an elliptic  \PDO\ of order $-1/2$ with principal symbol 
\[
\sigma_p(B)(r,\tau)=  \sqrt{\pi/2} e^{-\i \pi/4}\sqrt{r}\left(\tau_+^{-1/2}+ \i \tau_-^{-1/2}\right).
\]
The operator $B$ on $\mathcal{E}'(\R_+)$  is an elliptic  \PDO\ of order $1/2$ with principal symbol given by the inverse of that of $A$.
\end{proposition}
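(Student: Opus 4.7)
The plan is to exhibit $A$ directly as a classical \PDO\ on $\R_+$ by recognizing its Schwartz kernel as a smooth amplitude times the one-sided fractional integration kernel $(t-r)_+^{-1/2}$, to compute the principal symbol using the Fourier transform of $x_+^{-1/2}$, and finally to deduce the statement for $B$ from ellipticity of $A$ combined with the Abel inversion identity.

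First, for $0 < r < t$ I factor
\[
\frac{r}{\sqrt{t^2-r^2}} \;=\; \frac{r}{\sqrt{t+r}}\,(t-r)_+^{-1/2},
\]
where the prefactor $r/\sqrt{t+r}$ is smooth on $\R_+\times \R_+$ and the conormal singularity is entirely contained in $(t-r)_+^{-1/2}$. Localizing to any open $U\Subset\R_+$, the Schwartz kernel of $A$ then has wave front set inside the conormal bundle of the diagonal, so $A$ is a classical \PDO\ of order $-1/2$ on $U$.

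Second, inserting the elementary identity
\[
\int_0^\infty x^{-1/2}\,e^{-\i\tau x}\,\d x \;=\; \sqrt{\pi}\,e^{-\i\pi/4}\bigl(\tau_+^{-1/2} + \i\,\tau_-^{-1/2}\bigr)
\]
to represent $(t-r)_+^{-1/2}$ as an oscillatory integral exhibits $A$ with amplitude $(r/\sqrt{t+r})\,\sqrt{\pi}\,e^{-\i\pi/4}(\tau_+^{-1/2}+\i\tau_-^{-1/2})$. Restricting the smooth factor to the diagonal $t=r$ gives $r/\sqrt{2r}=\sqrt{r/2}$ and yields the stated principal symbol
\[
\sigma_p(A)(r,\tau) \;=\; \sqrt{\pi/2}\,e^{-\i\pi/4}\,\sqrt{r}\,\bigl(\tau_+^{-1/2} + \i\,\tau_-^{-1/2}\bigr).
\]
Ellipticity is manifest since $|\tau_+^{-1/2}+\i\tau_-^{-1/2}|^2 = \tau_+^{-1}+\tau_-^{-1} = |\tau|^{-1}$, so $|\sigma_p(A)|^2 = \pi r/(2|\tau|)$ is nonzero for $r>0$ and $\tau\neq 0$.

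For $B$, ellipticity of $A$ provides a parametrix $\tilde B$ that is a classical \PDO\ of order $+1/2$ with principal symbol $\sigma_p(A)^{-1}$ and with $\tilde B A \equiv \Id$ modulo a smoothing operator. The classical Abel inversion identity $BA=\Id$ on $\mathcal{E}'(\R_+)$ --- which, after the substitution $u=r^2$, is reduced to the standard one-dimensional Abel inversion on $\R_+$ --- then gives $(B-\tilde B)A\equiv 0$ modulo smoothing; composing with $\tilde B$ on the right shows $B\equiv \tilde B$ modulo smoothing, so $B$ is itself a classical \PDO\ of order $+1/2$ with principal symbol $1/\sigma_p(A)$. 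The main delicate point to watch is the behavior at $r=0$: both the amplitude $r/\sqrt{t+r}$ and the factor $1/r$ appearing in the definition of $B$ are smooth only on $\R_+$, so restricting to $\mathcal{E}'(\R_+)$ is essential and all \PDO\ statements are to be read uniformly on compact subsets of $\R_+$.
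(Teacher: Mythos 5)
Your proposal is correct and follows essentially the same route as the paper: the same factorization $r(t^2-r^2)^{-1/2}=\frac{r}{\sqrt{t+r}}(t-r)_+^{-1/2}$, the same Fourier transform of $w_+^{-1/2}$ to produce the amplitude, and restriction to the diagonal $t=r$ for the principal symbol, with the claim for $B$ deduced from $A$ being elliptic. Your treatment of $B$ is in fact slightly more careful than the paper's (which simply asserts that $B$ is a parametrix of $A$), since you explicitly reconcile the concrete Abel inversion operator with the abstract parametrix modulo smoothing.
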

\begin{proof}
The Schwartz kernel of $A$ can be written as 
\[
A(t,r)= A^\sharp(t,r,t-r), \quad \text{where}\quad A^\sharp(t,r,w) = \frac{r}{\sqrt{t+r}}w_+^{-1/2},
\]
and $w_+=\max(w,0)$. The Fourier transform of $w_+^{-1/2}$ is equal to 
\[
\sqrt\pi e^{-\i \pi/4}\left(\tau_+^{-1/2}+ \i \tau_-^{-1/2}\right).
\]
Then $A$ is a formal \PDO\ with an amplitude given by the partial Fourier transform of $A^\sharp$ w.r.t.\ $w$, i.e., 
\[
 \sqrt\pi e^{-\i \pi/4}\frac{r}{\sqrt{t+r}}\left(\tau_+^{-1/2}+ \i \tau_-^{-1/2}\right).
\]
Since $t$ and $r$ are strictly positive, there is no singularity in $1/\sqrt{t+r}$. The singularity at $\xi=0$ can be cut off at the expense of a smoothing term. Set $t=r$ to get the principal symbol of $A$. Since $B$ is a parametrix of $A$, the second assertion follows directly. 
\end{proof} 

Note that the full symbol of $A$ can be computed from the asymptotic expansion of the Bessel function $J_0$ since $A$ is the composition of the Fourier Sine transform and the zeroth order Hankel transform, see \cite{Gorenflo_Abel}.

\subsection{Working with  the Darboux equation.}\label{Darboux}
The unrestricted spherical means $Gf(t,x) := (2\pi t)^{-1}Rf$ solve the Darboux equation
\[
\left(\partial_t^2 +\frac1t\partial_t-\Delta\right)Gf(t,x)=0
\]
with boundary conditions $Gf(0,x)=f(x)$, $\partial_tGf(0,x)=0$, see e.g., \cite{ArgKuchment07} and the references there. The Darboux equation has the same principal symbol as the wave equation and therefore the same propagation of singularities for $t\not=0$. Replacing the wave equation with the Darboux one seems as a natural thing to do --- this would have eliminated the need for the operators $A$ and $B$. 
On the other hand, $t=0$ is a singular point which poses technical problems with the backprojection, and for this reason we prefer to work with the wave equation.

\subsection{Geometric Optics}\label{sec_go}
 The solution of \r{1} is given by
\be{go1}
u =  \frac{\sin(t|D|)}{|D|}f = -\frac{e^{-\i t|D|}}{2\i |D|}f +  \frac{e^{\i t|D|}}{2\i |D|}f = u_+ +u_-,
\ee
where
\be{go1'}
\begin{split}
u_+&= \frac1{(2\pi)^{2}} \int e^{\i (( x-y)\cdot \xi -t|\xi|   )}\frac{(-1)}{2\i|\xi|} f(y)\, \d y\, \d\xi,\\ u_- &= \frac1{(2\pi)^{2}} \int e^{\i (( x-y)\cdot \xi +t|\xi|   )}\frac{1}{2\i|\xi|} f(y)\, \d y\, \d\xi.
\end{split}
\ee
The first term $u_-$ is in the kernel of $\partial_t+\i|D|$, and if we consider $t$ as a parameter, it is an FIO associated with the canonical relation $(x,\xi) \mapsto (x+t\xi/|\xi|,\xi)$. The second term $u_-$ is in the kernel of  $\partial_t-\i|D|$ associated with  $(x,\xi) \mapsto (x-t\xi/|\xi|,\xi)$. 

We assume now that $\WF(f)\subset \Sigma_L$, see \r{A1}. Then we set $\Lambda_L f=\Lambda f$ with $f$ as above. We define $\Lambda_R$ in a similar way. 

Restrict \r{go1} to $\R\times\gamma$, see \r{1.2a}, to get
\be{go1r}
\Lambda_L = \Lambda_L^+ +\Lambda_L^-, 
\ee
where $\Lambda_\pm f$ are the restrictions of the two terms above to $\R\times\gamma$. For  the first term,  we set $x=\gamma(s)$ to get
\be{go1a}
\Lambda_L^+ f :=  -\frac{e^{-\i t|D|}}{2\i |D|}f\Big|_{\R_+\times\gamma} = (2\pi)^{-2} \int e^{\i (( \gamma(s)-y)\cdot \xi -t|\xi|   )}\frac{(-1)}{2\i|\xi|} f(y)\, \d y\, \d\xi.
\ee
Since we made an assumption guaranteeing that $\dot\gamma(s)\cdot\xi\not=0$,  this is an elliptic FIO with a non-degenerate phase function, see e.g, \cite{Treves2}, Ch.VI.4 and Ch.VIII.6, of order $-1$ associated with the canonical relation
\[
\mathcal{C}_L^+ : (\gamma(s)-t\xi/|\xi|, \xi) \longmapsto (t,s,-|\xi|, \dot\gamma(s)\cdot\xi) ,
\]
well defined on $\Sigma_L^+$. 
Another way to write this is the following. Let $t(x,\xi)>0$, $s(x,\xi)$ be such that $x+t(x,\xi)\xi/|\xi|= \gamma(s(x,\xi))$. Then
\be{go1b}
\mathcal{C}_L^+ : (x, \xi) \longmapsto ( t(x,\xi), s(x,\xi),-|\xi|, \dot\gamma(s)\cdot\xi). 
\ee
Similarly, the second term in \r{go1} defines
\[
\Lambda_L^- f :=  \frac{e^{\i t|D|}}{2\i |D|}f\Big|_{\R_+\times\gamma} = (2\pi)^{-2} \int e^{\i (( \gamma(s)-y)\cdot \xi +t|\xi|   )}\frac{1}{2\i|\xi|} f(y)\, \d y\, \d\xi.
\]
This is an FIO associated with the canonical relation
\be{go1c}
\mathcal{C}_L^- : (x, \xi) \longmapsto ( t(x,-\xi) ,s(x,-\xi) ,|\xi|, \dot\gamma(s)\cdot\xi),
\ee
since   $x-t(x,-\xi)\xi/|\xi|=\gamma(s(x,-\xi))$ for  $(x,\xi)\in \Sigma_L^-$. 
We now define $\mathcal{C}_L$ as  $\mathcal{C}_L^+$ on $\Sigma_L^+$, and   $\mathcal{C}_L^-$ on $\Sigma_L^-$. Similarly, $\Lambda_{L} f$ is defined as  $\Lambda_{L}^+ f$ when $\WF(f)\in \Sigma_{L}^+$. 
Also, set $\Sigma = \mathcal{C}(\Sigma_L)\subset T^*(\R^+\times\gamma)$, see also \r{0.1}.

We define $\mathcal{C}_R^\pm$, $\mathcal{C}_R$, $\Lambda_R^\pm f$, $\Lambda_R f$ in the same way. In fact, they are the same maps as the ``L'' ones but restricted to $\Sigma_{R}^\pm$,  $\Sigma_R$, and $f$ with wave front sets there, respectively. Clearly, the map $\mathcal{C}$ defined in the Introduction satisfies
\[
\mathcal{C} = \mathcal{C}_R^{-1}\mathcal{C}_L : \Sigma_L \longrightarrow \Sigma_R, 
\]
and \r{0.1} holds.

Relations \r{go1b}, \r{go1c} imply also the following, compare with \r{1c},
\be{go3}
\WF(\Lambda_{L,R}^\pm f) \subset \left\{(t,s,\tau,\sigma); \;  |\sigma|\le \mp \delta\tau \right\},
\ee
where $0<\delta<1$ is the cosine of the smallest angle at which a line through $(x,\xi)\in \WF(f)$ can hit $\gamma$, see \r{go1b}, \r{go1c}.

Since $\Lambda_L$ and $\Lambda_R$ are elliptic FIOs (associated with canonical graphs), they have left and a right parametrices $\Lambda^{-1}_L$ and $\Lambda_R^{-1}$, of order $1$ associated with $\mathcal{C}_L^{-1}$ and $\mathcal{C}_R^{-1}$, respectively. We have the following more conventional representation of those inverses. 

We recall the definition of incoming and outgoing solutions in a domain $\Omega$. 
Let $u(t,x)$ solve the wave equation in $[0,T]\times\Omega$ up to smooth error, i.e., $(\partial_t^2-\Delta)u\in C^\infty$, where $\Omega\subset\R^2$ is a fixed domain, and $T>0$. We call $u$ \textit{outgoing} if $u(0,\cdot)=u_t(0,\cdot)=0$  in $\Omega$; and we call $u$ \textit{incoming} if $u(T,\cdot)=u_t(T,\cdot)=0$ in $\Omega$. We micro-localize those definitions as follows. A solution of the wave equation modulo smooth functions near $\R\times\gamma$, on the left (or right) of $\gamma$ is called outgoing/incoming, if all singularities starting from points on $\R\times\gamma$ propagate to the future only ($t>0$), and respectively to the past ($t<0$). 

\begin{proposition}\label{pr_inverse}
Let $u_L$ be the incoming solution of the wave equation with Dirichlet data $h$ on $\R_+\times\gamma$, where $\WF(h)\subset\Sigma$; and assume \r{A1}. Then 
\be{go2}
\Lambda_L^{-1}h = 2\partial_t u_L|_{t=0}. 
\ee
\end{proposition}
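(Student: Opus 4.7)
The plan is to exploit the splitting $u=u_++u_-$ from \r{go1}, identify $u_L$ microlocally with the ``$+$'' half-wave propagator of $f$ on the left of $\gamma$, and then read off $f$ from the Cauchy derivative at $t=0$. By linearity it suffices to treat $\WF(f)\subset\Sigma_L^+$; the case $\Sigma_L^-$ is symmetric with the roles of $u_+$ and $u_-$ interchanged.

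First I would show that on $\Sigma$ only $u_+$ contributes to the boundary trace. By \r{go1'}, $u_-$ is the FIO whose canonical relation sends $(y,\xi)\mapsto(y-t\xi/|\xi|,\xi)$; for $(y,\xi)\in\Sigma_L^+$ and $t>0$ this ray moves away from $\gamma$ into the left, so $u_-|_{\R_+\times\gamma}$ is smooth microlocally in $\Sigma$. Hence $h=\Lambda_L^+f=u_+|_{\R_+\times\gamma}$ modulo $C^\infty$ on $\Sigma$. Next, the canonical relation of $u_+$ sends $(y,\xi)\mapsto(y+t\xi/|\xi|,\xi)$, so a boundary singularity of $u_+$ at $(t_0,\gamma(s_0))$ arises from $(y,\xi)\in\Sigma_L^+$ with $y+t_0\xi/|\xi|=\gamma(s_0)$, and the bicharacteristic through this boundary point extends into the left side of $\gamma$ as $\{(t_0-s,\gamma(s_0)-s\xi/|\xi|):0\le s\le t_0\}$, strictly in the past of $t_0$. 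This is precisely the incoming condition, so $u_+$ is an incoming solution on a left neighborhood of $\gamma$ with Dirichlet trace $h$ (modulo $C^\infty$).

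Since the proposition defines $u_L$ as the incoming solution with trace $h$, the difference $w:=u_L-u_+$ is an incoming solution whose trace on $\R_+\times\gamma$ is smooth. By \r{A1} the bicharacteristics in the support of interest meet $\R_+\times\gamma$ transversely, and propagation of singularities together with the restriction-to-boundary theorem then forces $\WF(w)$ to be empty in the relevant microlocal region; hence $u_L=u_+$ modulo $C^\infty$. A direct differentiation,
\[
\partial_t u_+=\partial_t\!\left(-\frac{e^{-\i t|D|}}{2\i|D|}f\right)=\frac12\, e^{-\i t|D|}f,
\]
gives at $t=0$ the identity $\partial_t u_+|_{t=0}=f/2$, so $2\partial_t u_L|_{t=0}=f$ modulo $C^\infty$, which is the desired formula $\Lambda_L^{-1}h=2\partial_t u_L|_{t=0}$.

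The main obstacle is the uniqueness step: one has to make the notion of ``incoming solution modulo $C^\infty$'' rigorous and establish that the one-sided incoming condition together with a smooth Dirichlet trace forces smoothness of the solution in the relevant conic region. Transversality in \r{A1} is essential, ensuring both that the FIOs $\Lambda_L^\pm$ are elliptic with canonical-graph relations and that the trace map intertwines interior and boundary wave fronts bijectively on each branch, so that a smooth trace cannot coexist with a nontrivial incoming wave front.
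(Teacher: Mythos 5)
Your argument is correct and is essentially the paper's own proof: both identify the incoming solution $u_L$ with the half-wave term $u_+$ of \r{go1'} (using \r{A1} to see that $u_-$ leaves no trace on $\Sigma$ and that $u_+$ is incoming on the left of $\gamma$), and then read off $2\partial_t u_+|_{t=0}=f$. The only difference is that the paper additionally verifies explicitly that $2\partial_t u_L|_{t=0}$ is a right inverse of $\Lambda_L$ via the Cauchy data \r{1.2'}, whereas you let this follow from $\Lambda_L$ being an elliptic FIO associated with a canonical graph --- a shortcut the paper itself acknowledges as legitimate.
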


\begin{proof}
Call the operator on the r.h.s.\ of \r{go2}   $M$ for a moment. To compute $M\Lambda_L f$, recall \r{1.2a}.  Assume first that $\WF(f)\subset \Sigma_L^+$. Then $\Lambda_L f = \Lambda_L^+ f$, see \r{go1a}, i.e.,  $\Lambda_L f$ is the trace on the boundary of $u_+$ defined in \r{go1'}. Now, to obtain $M\Lambda_L f $, we have to find the incoming solution of the wave equation with boundary data $\Lambda_L f$. That solution would be $u_+$ modulo $C^\infty$, i.e., $u_+=u_L$ in this case. Then $M\Lambda_L f= 2\partial_tu_+|_{t=0}$, by the definition of $M$. The latter equals $f$, by the definition of $u_+$.  If $\WF(f)\subset \Sigma_L^-$, then  $\Lambda_L f = \Lambda_L^- f$, and $M\Lambda_L f= 2\partial_tu_-|_{t=0}=f$. In the general case, $f$ is a sum of two terms with wave front sets in $\Sigma_L^+$ and $\Sigma_L^-$, respectively. 

To see that $M$ is a right inverse as well (which in principle follows from the characterization of $\Lambda_L$ as an elliptic  FIO of graph type), let $u_L$ be as in the proposition. Then $Mh = 2 \partial_t u_L|_{t=0}$. To compute $\Lambda_L Mh$, we need to find  first the outgoing solution of the wave equation  with Cauchy data $(0,M h)$ at $t=0$. This solution must be $u_L$. Indeed, call that solution $v$ for the moment and write $v=v_++v_-$ as in \r{go1}. Assume first that $\WF(h)$ is included in $\tau<0$, where $\tau$ us the dual variable to $t$, see\r{go1b}. Then the singularities of $v_+$ hit $\R\times\gamma$ but those of $v_-$ do not, by \r{A1}. The solution $v_+$ has Cauchy data at $t=0$ given by 
\be{1.2'}
(-\frac1{2 \i} |D|^{-1} Mh, \frac12 Mh),
\ee
see \r{go1}. Now, $u_L$ has the same Cauchy data, which proves that $u_+=u_L$. Then $\Lambda_L Mh$ is the trace of $u_+$ on the boundary, which is $h$. The case $\tau>0$, and  the general one, can be handled in a similar way.
\end{proof}

\subsection{Proof of Theorem~\ref{thm1}} 
Set $f=f_L+f_R$. 
Assume now that $R_\gamma f\in C^\infty(\Sigma)$. Apply $A  \otimes \Id$ to that, where $A$ acts w.r.t. to $r$ and $\Id$ is w.r.t.\ $s$,   to get $\Lambda f\in C^\infty(\Sigma)$. Since $A$ has a left inverse on $\R_+$, this is actually equivalent to $R_\gamma f\in C^\infty(\Sigma)$, i.e., 
\be{1.3b}
R_\gamma f\in C^\infty (\Sigma)  \quad  \Longleftrightarrow\quad \Lambda f_L +\Lambda f_R\in C^\infty(\Sigma).
\ee
Indeed, recall that $(\tau,\sigma)$ is the dual variable to $(t,r)$; then  $A  \otimes \Id$ is elliptic on $\{\tau\not=0\}$. By \r{go3}, $A  \otimes \Id$ is elliptic in a conic neighborhood of $\WF(\Lambda f)$, which proves our claim. 
The restrictions of the wave front sets of $f_L$ and $f_R$ imply that we can replace $\Lambda$ above by its microlocalized versions $\Lambda_{L,R}$:
\be{1.5}
R_\gamma f\in C^\infty (\Sigma)  \quad  \Longleftrightarrow\quad   \Lambda_L f_L +\Lambda_R f_R\in C^\infty(\Sigma).
\ee

Now, apply the parametrix $\Lambda_L^{-1}$ to \r{1.5} to get
\be{1.6}
f_L   + \Lambda_L^{-1}\Lambda_R f_R\in C^\infty(\Sigma).
\ee
Of course, starting from \r{1.6} we can always go back to \r{1.5}. Therefore, \r{1.4} and \r{1.5} are equivalent, and they are both equivalent to $\Lambda_R^{-1}\Lambda_Lf_L   + f_R \in C^\infty(\Sigma)$.

To show that $U$ is unitary, we will compute $\|Uf_L\|$ first. Let $f_L$ be as above. Denote by $u_L$ the  solution with Cauchy data $(0,f_L)$ at $t=0$. To obtain $\Lambda_R^{-1}$, we need to solve backwards (to find the incoming solution) of the wave equation on the right of $\gamma$ with boundary data $\Lambda_+f = u_L|_{\R_+\times\gamma}$. Let us call that solution $u_R$. On the other hand, $u_L$ restricted to the right of $\gamma$ is an outgoing solution with the same trace on the boundary. Then $v:= u_R-u_L$ solves the wave equation  the right of $\gamma$ , and for $t=0$ we have $v=u_R$; while for $t=T\gg1$, we have $v=-u_L$. Moreover, $v$ has zero Dirichlet data on the boundary.  Therefore, up to a smoothing operator applied to $f_L$, the energy of $u_L$ at $t=T$ coincides with that of $u_R$ at $t=0$. The former one is equal to the energy of the Cauchy data $(0,f_L)$, up to smoothing operator, and therefore, $E(u_R(0)) = \|f_L + Kf_L\|_{L^2}$, where $K$ is smoothing. If $\WF(f_L)\subset \Sigma_L^+$, then $u_L$ solves $(\partial_t+\i|D|)u_L\in C^\infty$, and then so does $u_R$. Then $E(u_R(0)) = \|\Lambda_R^{-1}h\|^2_{L^2}$, see \r{1.2'}, where $h=\Lambda_Lf_L$. Therefore we showed that
\[
\|(\Id +K)f_L\|_{L^2}=  \|\Lambda_R^{-1}\Lambda_L f_L\|_{L^2}.
\]
This proves that $U^*U=\Id$ modulo an operator that is smoothing on $\Sigma_L^+$. In the same way we show that this holds on $\Sigma_L^-$ which is disconnected from $\Sigma_L^+$. Since $U$ is microlocally invertible on $\Sigma_L$, we get that $U$ is unitary up to a smoothing operator on  $\Sigma_L$, as claimed. 

This completes the proof of the theorem. 

\begin{example}\label{Example1} 
We give an example of cancellation of singularities. Let $\gamma= S^1$ be the unit circle  parameterized by its polar angle $s$. Then $|\dot\gamma|$ is not unit but constant, which is enough. Let $f$ be the characteristic function of the circle $|x|=1/2$, i.e., $f(x) = H(1/4-|x|^2)$, where $H$ is the Heaviside function. Then, clearly, $R_\gamma f(r,\theta)$ is singular at $r=1/2$ (not only), with a singularity of the type $\sqrt{r-1/2}$, see also Figure~\ref{fig:SAR_pic3}.  
We will construct a radial function $g$ supported outside the unit disc so that $R_\gamma(f-g)(r,\theta)$ is smooth in a neighborhood of $r=1/2$.

We will work with radial functions only, i.e., functions of the form $F(|x|^2)$. We will identify the latter with $F$, somewhat incorrectly. Then $R_\gamma F$ is independent of the angle $s$, and it is enough to fix $s=0$ corresponding to $x=(1,0)$. Then we have
\[
\frac1r R_\gamma F(r) = \int_{-\pi}^\pi F \left((1+r\cos\theta)^2+(r\sin\theta)^2\right)\, \d\theta.
\]
The factor $1/r$ can be explained by the requirement that the measure along each circle is Euclidean. Since $1/r$ is a smooth factor near $r=1$, we will drop it. We also use the fact that the integrand is an even function of $\theta$, so we denote $(1/(2r))R_\gamma$ by $R$: 
\[
Rf(r) = \int_{0}^\pi f \left(1+r^2+2r\cos\theta\right)\, \d\theta.
\]
Set $r=1/2+h$. We are interested in the singularities near $h=0$ and in what follows, $|h|\ll1$. After replacing $\theta$ by $\pi-\theta$, we get
\[
Rf(1/2+h)=  \int_{0}^\pi H\! \left(2r\cos\theta-r^2-3/4\right)\Big|_{r=1/2+h} \d\theta = H(h) \arccos\frac{(1/2+h)^2+3/4}{1+2h}.
\]
The following calculations were performed with Maple. The series expansion of the expression above is
\be{ex1}
Rf(1/2+h)= H(h)\left( \sqrt2h^{1/2}-\frac{17\sqrt2}{12} h^{3/2}+\frac{243\sqrt2}{160}h^{5/2}+O(h^{7/2})\right). 
\ee
We are looking for a radial $g$ of the type
\be{ex1g}
g(|x|^2) = H(t)(  a_0+a_1t+ a_2t^2+\dots  )|_{t=|x|^2-9/4}.
\ee
Then 
\[
\begin{split}
Rg(1/2+h) &= \int_{0}^\pi H(h)(a_0+a_1t+a_2t^2+\dots) 
\big|_{t =  (1+2h)\cos\theta+(1/2+h)^2-5/4 } \d\theta \\
  &= a_0A_0(h)+a_1A_1(h)+a_2A_2(h)+\cdots.
\end{split}
\]
For $A_0$ we easily get
\[
A_0(h) = \arccos\frac{5/4-(1/2+h)^2}{1+2h}= \sqrt6 h^{1/2}-\frac{7\sqrt6}{12}h^{3/2} + \frac{1243\sqrt6}{1440}h^{5/2} +O(h^{7/2}) , \quad h\ge0.
\]
By \r{ex1}, to cancel the $h^{1/2}$ term in $R(f-g)$, we need to chose
\[
a_0= \sqrt3/3.
\]
Then 
\[
R(f-g_0) = -\frac{5\sqrt2}6 h^{3/2}+O(h^{7/2}), \quad g_0 := a_0H(t)|_{t=|x|^2-9/4}.
\]
To improve the smoothness near $h=0$, we compute 
\[
A_1(h) = \frac{8\sqrt2}3 h^{3/2} -\frac{37\sqrt2}{15} h^{5/2}+O(h^{7/2}).
\]
Then, as before, we find that we need to choose
\[
a_2=-5/16
\]
to kill the  $O(h^{3/2})$ term, and then
\[
R\left(f-g_1\right) = -\frac{83\sqrt2}{720} h^{5/2}+O(h^{7/2}), \quad g_1 :=  H(t)(  a_0+a_1t)|_{t=|x|^2-9/4}.
\]
Note that this was possible to do because the leading coefficient (the one in front of $h^{3/2}$) in the expansion of $A_1$ is non-zero. The latter also follows from the ellipticity of $\Lambda$. Proceeding in the same way, we can get a full expansion of the conormal singularity of $g$ at $|x|=3/2$ that would make $R(f-g)$ smooth at $r=1/2$. 

The first three coefficients of $g$ are shown below 
\[
g= H(t)\left( \frac{\sqrt3}3 -\frac{5}{16} t+ \frac{83}{5184}t^2+ 
O\!\left(t^3\right)   \right), \quad 
t:= |x|^2-9/4.
\]

\begin{figure}[h] 
  \centering
  \includegraphics[width=3.41in,height=2.41in,keepaspectratio, trim = 40mm 0mm 0mm 00mm, clip]{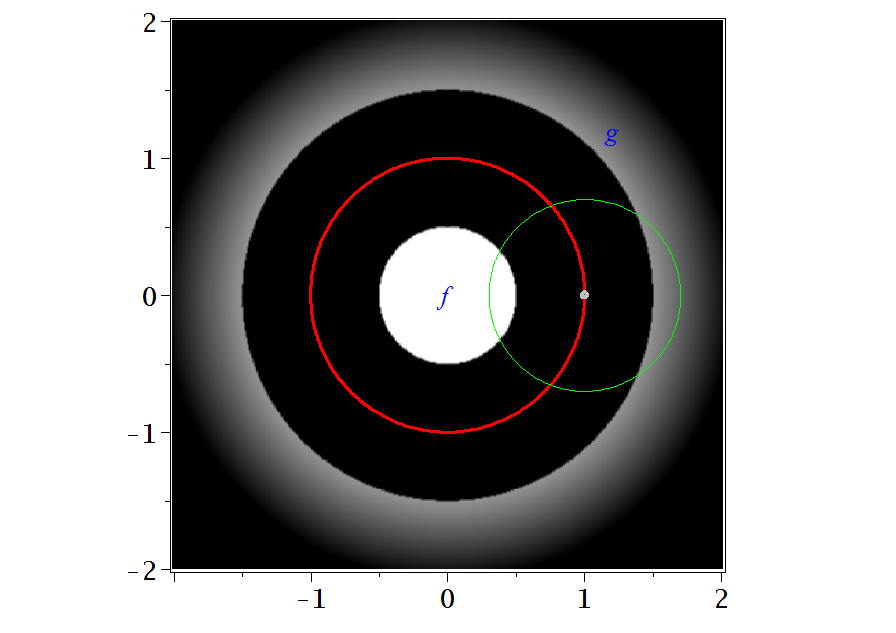}
  \includegraphics[width=3.24in,height=3.45in,keepaspectratio,trim = 20mm 110mm 30mm 70mm, clip]{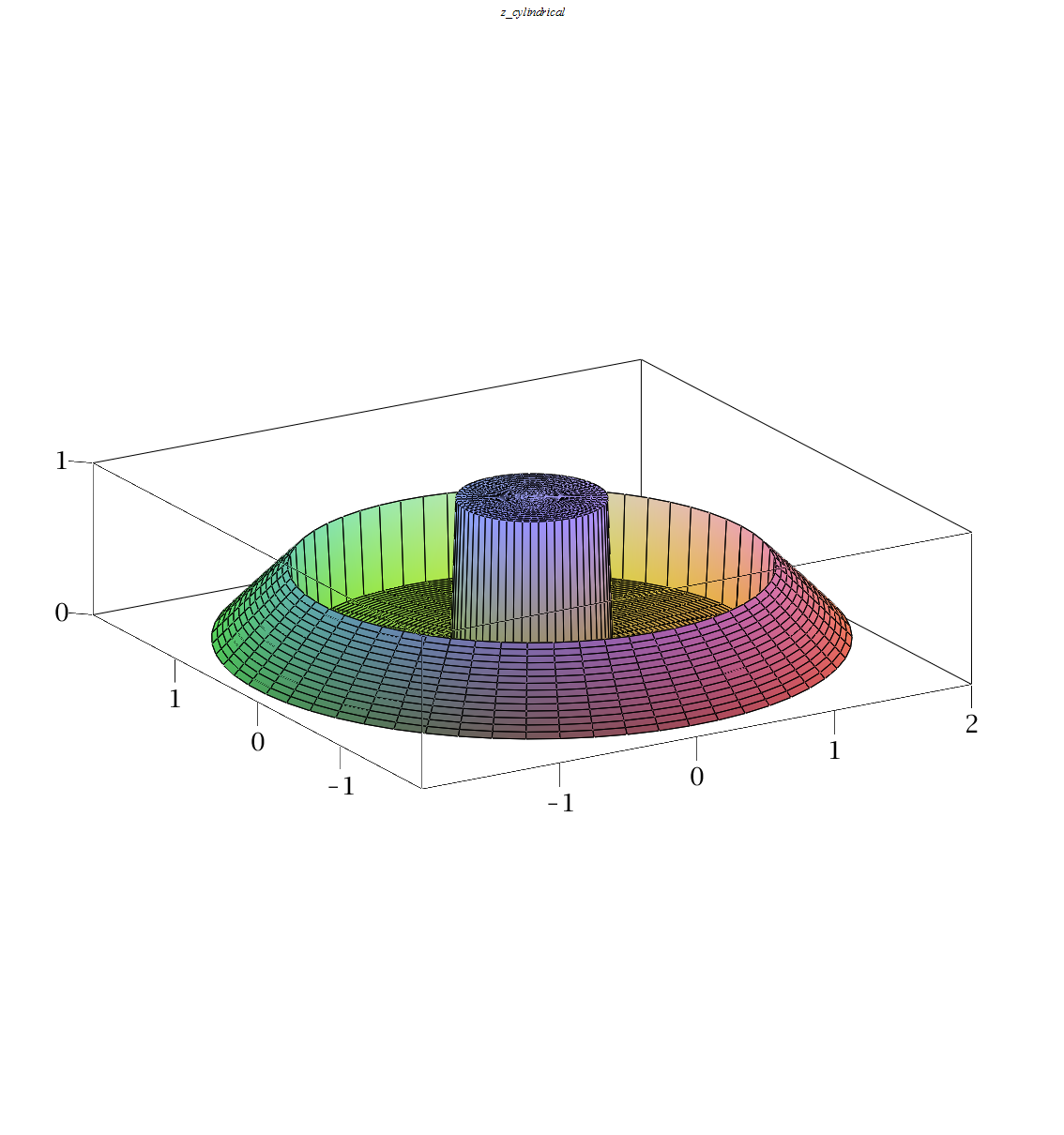} 
  \caption{Left: density plot (white $=1$, black $=0$); right: a graph of $f$ and $g$  with  $R(f-g)$ smooth near $r=1/2$.}
  \label{fig:SAR_pic1.png}
\end{figure}

\begin{figure}[h] 
  \centering
  \includegraphics[width=2.64in,height=1.58in,keepaspectratio]{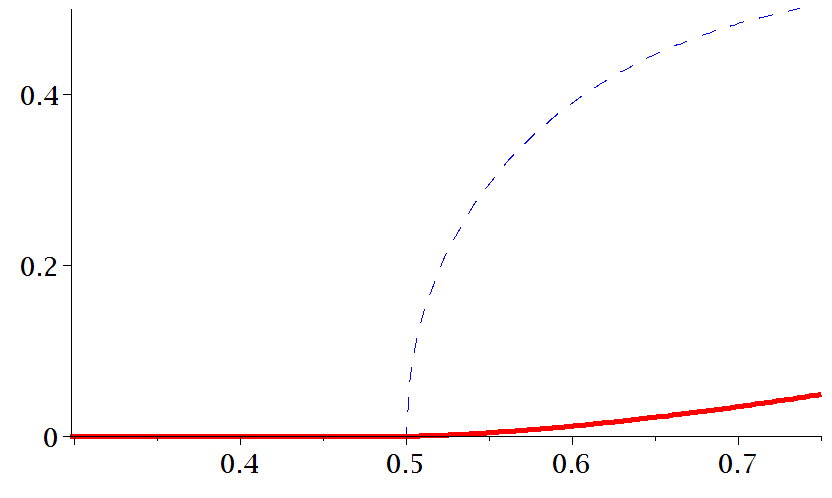}
  \caption{The thick line: The graph of $R(f-g)$ near $r=1/2$ computed numerically  with three terms in the expansion of $g$. The dotted line: the graph of $Rf$ having a square root type of singula\-rity.}
  \label{fig:SAR_pic3}
\end{figure}

We could continue this process to kill all the singularities for all $r$, not just at $r=3/2$ by constructing a suitable jump of $g$ at $r=5/2$, then at $r=7/2$, etc., which also illustrates Theorem~\ref{thm4}.

\end{example}

\subsection{Proof of Theorem~\ref{thm2}} 
Let first $(x,\xi)\in \WF(f)$. Let $\chi(x,D)$ be a pseudo-differential cutoff in some small conic neighborhood of $(x,\xi)$. Then  $R_\gamma \chi(x,D)f$ has singularities in small neighborhoods of $\mathcal{C}^\pm(x,\xi)$ in $T^*(\R_+\times \gamma)$, where we used the notation above. Take the plus sign first. Since $R_\gamma f$ is smooth, there must be another singularity that cancels this one. By section~\ref{sec_kernel}, it must be at the mirror point $(x_1,\xi_1)$ of $(x,\xi)$ about the line tangent to $\gamma$ at $p_1$, see  Figure~\ref{fig:sar2}. Clearly, $(x_1,\xi_1)$ belongs to the set $\art(x,\xi)$; it corresponds actually to the first point $t$, when $t$, increases from $t=0$, not equal to $(x,\xi)$, see \r{Art}. 
Moreover, by Theorem~\ref{thm1} we can construct $f_1$ with a wave front set near that mirror point so that $R_\gamma(\chi f+f_1)$ is smooth at  $\mathcal{C}^+(x,\xi)$. Since the line through $(x_1,\xi_1)$ crosses $\gamma$ transversely, it has to cross it again, also transversely. This creates another singularity on the boundary, represented in Figure~\ref{fig:sar2} by $p_2$ (of course, that singularity is an element of $T^*(\R_+\times \gamma)$). It needs to be canceled by another one, etc. We repeat the same argument for $\mathcal{C}^-(x,\xi)$. Therefore, we showed that if $(x,\xi)\in \WF(f)$, then the whole set $\art(x,\xi)$ is in $\WF(f)$. 

Now, assume that $(x,\xi)\not\in \WF(f)$. Since $R_\gamma f$ is smooth, a singularity at $(x_1,\xi_1)$ cannot exist, because it can only be canceled by one at $(x,\xi)$. Then we show step by step that no point in $\art(x,\xi)$ can be singular for $f$.

\subsection{Proof of Theorem~\ref{thm3}} 
Let $(x_0,\xi_0)$ be visible from $\gamma$. This means that $x_0\not\in\gamma$, and that the line through $(x_0,\xi_0)$  intersects $\gamma$ transversely. Then $(x_0,\xi_0)\in \art(x,\xi)$ for some $(x,\xi)\in T^*\Omega\setminus 0$, even if $x_0$ is outside $\Omega$ (if $x_0\in\Omega$, then $(x,\xi)= (x_0,\xi_0)$). Indeed, for this, we have to show that the equation $\Phi^{-t}\circ \Phi^t_\gamma(x,\xi)= (x_0,\xi_0)$, $x\in\Omega$, is solvable for some $t$. This is true because we can set  $(x,\xi)= \Phi^{-t}_\gamma\circ \Phi^t(x_0,\xi_0)$, where $t$ is any interior point of the non-empty interval $(t_1,t_2)$ for which $x_0+t\xi_0\in \Omega$. 
For any $t$, the projection  $\pi\circ \Phi^t_\gamma(x,\xi)$ onto the base is in $\bar\Omega$. Then $|\pi\circ \Phi^{-t}\circ \Phi^t_\gamma(x,\xi)|>|t|-C_K$, where $C_K := \max(|x|;\; x\in \bar\Omega)$. Therefore, $\art(x,\xi)$ does not lie over any compact set. By the compactness assumption of the theorem, it has elements outside $\WF(f)$. Then by Theorem~\ref{thm2}, $(x,\xi)\not\in \WF(f)$. 

\subsection{Constructing a parametrix for $R_\gamma f$ in $\Omega$, when $\WF(f)$ lies over a fixed compact set.}\label{sec_TAT} 
We will give another, constructive proof of Theorem~\ref{thm3} for the singularities of $f$ inside $T^*\Omega$. Let $\singsupp f\subset K$, where $K$ is a fixed compact set. Fix $T$ so that
\be{1.7}
T>\max\left(|x-y|;\; x\in \bo, y\in K\right). 
\ee
Then all singularities of the solution $u$ of \r{1} would leave $\bar\Omega$ for $t\ge T$, and $R_\gamma f\in C^\infty$ for $r>T$. The latter is obvious even without the propagation of singularities theory. 
 Let $v$ be the incoming solution of the wave equation in $\Omega$ with Dirichlet data $\Lambda f = (A\otimes\Id)R_\gamma f$ on $[0,T]\times\bo$, cut-off smoothly near $t=T$. More precisely, let $\chi$ be a smooth function of $t$ so that $\chi(t)=0$ for $t>T$, and $\chi(t)=1$ for $0\le t\le T_0$, where $T_0<T$ is chosen so that $T_0$ satisfies \r{1.7} as well. Let $v$ solve
\begin{equation}   \label{1.8}
\left\{
\begin{array}{rcll}
(\partial_t^2 -\Delta)v &=&0 &  \mbox{in $[0,T]\times \Omega$},\\
u|_{t=T} &=& 0  &\mbox{in  $\Omega$},\\ \quad \partial_t u|_{t=T}& =&0 &\mbox{in  $\Omega$},\\
u|_{ [0,T]\times \bo }& =& h,
\end{array}
\right.               
\end{equation}
where $h$ will be chosen in a moment to be $\chi \Lambda f $.
Set
\be{1.8'}
Gh = \partial_t v|_{t=0}.
\ee
Then $G\chi \Lambda f =f$ in $\Omega$ modulo $C^\infty$. Indeed, consider $w :=u-v$. It solves
\begin{equation}   \label{1.9}
\left\{
\begin{array}{rcll}
(\partial_t^2 -\Delta)w &=&0 &  \mbox{in $[0,T]\times \Omega$},\\
w|_{t=T} &\in& C^\infty(\Omega),\\ \quad \partial_t w|_{t=T}& \in&C^\infty(\Omega),\\
w|_{ [0,T]\times \bo }& =& (1-\chi)u,
\end{array}
\right.               
\end{equation}
Then $f-Gh=\partial_t w|_{t=0}\in C^\infty(\Omega)$, which proves our claim. 

To summarize this, we proved the following.

\begin{proposition}\label{pr_parametrix}
Let $\gamma$ be as in Theorem~\ref{thm2} and let $f\in \mathcal{D}'(\R^2)$ be such that  $\singsupp f\subset K\setminus \gamma$, where $K$ is a fixed compact set. Let $T>0$, $\chi$ be as in \r{1.7} and \r{1.8}. Then 
\[
G\chi (A\otimes\Id)R_\gamma f = f|_\Omega \quad \mod C^\infty(\Omega).
\]
\end{proposition}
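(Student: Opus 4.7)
The strategy is to compare the backward-propagated solution $v$ with the forward solution $u$ of the Cauchy problem \r{1} and to show their difference is smooth in $\Omega$ at time zero, so that $Gh = \partial_t v|_{t=0}$ recovers $f|_\Omega$ modulo $C^\infty(\Omega)$.

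First I would record that $(A\otimes\Id)R_\gamma f = \Lambda f = u|_{\R_+\times\gamma}$, by \r{1.2a} and \r{1.3bb}, so the input $h := \chi\Lambda f$ to the backward problem \r{1.8} is a smooth time-cutoff of the Dirichlet trace of the forward solution of \r{1}. By the choice of $T$ in \r{1.7} and finite speed of propagation, every singularity of $u$ emanating from $\singsupp f \subset K$ exits $\bar\Omega$ by time $T_0$; hence $u|_{t=T}$ and $u_t|_{t=T}$ are smooth in $\Omega$, and $\Lambda f$ is smooth on $[T_0,T]\times\gamma$, so that $(1-\chi)\Lambda f \in C^\infty([0,T]\times\gamma)$.

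Next I would set $w := u - v$ on $[0,T]\times\Omega$, arriving at the mixed problem \r{1.9}. Its Cauchy data at $t=T$ are smooth (those of $u$ by the previous step, those of $v$ vanish by the definition of $v$) and its Dirichlet data $(1-\chi)u|_{[0,T]\times\gamma}$ are smooth by the smoothness of $\Lambda f$ on $\supp(1-\chi)$. By the standard well-posedness and regularity theory for the mixed Dirichlet problem for the wave equation on $[0,T]\times\Omega$, the solution $w$ is in $C^\infty([0,T]\times\bar\Omega)$, so in particular $\partial_t w|_{t=0}\in C^\infty(\Omega)$. Since $\partial_t u|_{t=0} = f$ and $\partial_t v|_{t=0} = Gh$, subtracting gives $f|_\Omega - Gh = \partial_t w|_{t=0}\in C^\infty(\Omega)$, as claimed.

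The main subtlety is arranging the cutoff $\chi$ so the boundary data of $w$ are genuinely smooth: I need a gap between $T_0$ and $T$ such that all of $\singsupp u \cap([T_0,T]\times\bar\Omega)$ is empty, and this is precisely why the proposition's setup requires $T_0$ to satisfy \r{1.7} in addition to $T$. Once this separation is in place the remaining ingredients — elliptic inversion of $A$ provided by Proposition~\ref{pr_AB} to identify $(A\otimes\Id)R_\gamma f$ with $\Lambda f$, and classical propagation of singularities up to the boundary for the wave equation — are routine and no further microlocal input is needed.
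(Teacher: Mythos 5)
Your argument is correct and follows essentially the same route as the paper: you identify $(A\otimes\Id)R_\gamma f$ with the boundary trace $\Lambda f$ of the forward solution $u$, form $w=u-v$ satisfying the mixed problem \r{1.9} with smooth Cauchy data at $t=T$ (by the choice of $T_0,T$ in \r{1.7}) and smooth Dirichlet data $(1-\chi)u$, and conclude $f|_\Omega-Gh=\partial_t w|_{t=0}\in C^\infty(\Omega)$. The only point the paper adds beyond this is the closing observation that $\singsupp f\cap\gamma=\emptyset$ keeps $\WF(h)$ away from $t=0$ and the relevant rays non-tangent to $\bo$, so that $G$ is a well-defined graph-type FIO on such $h$; your appeal to regularity for the mixed Dirichlet problem covers the same ground.
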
 
To complete the proof we only need to notice that by assumption, $\singsupp f$ is at positive distance to $\gamma=\bo$, which guarantees that $\WF(h)$, with $h= \chi (A\otimes\Id)R_\gamma f$, is separated from $t=0$, and the singularities of $w$ are never tangent to $\bo$. This makes the operator $G$ an FIO of order $0$ with a canonical relation a graph, like in the previous sections, and in particular $G$ is well defined on such $h$. Therefore, $G\chi (A\otimes\Id)R_\gamma f$ is well defined.

\subsection{Proof of Theorem~\ref{thm4}} \label{sec_TAT2}  
We first present a proof along the lines of the proof of Theorem~\ref{thm2} above. 
We  prove a somewhat weaker version first: for any $T>0$, we can complete $f$ to a distribution in $\R^2\setminus\gamma$ so that $R_\gamma f\in C^\infty((0,T)\times\gamma)$.
 Fix $(x,\xi)\in T^*\Omega\setminus 0$, and let $f$ has a wave front set in some small neighborhood of that point. Let $u_0$ be the solution of the wave equation in the plane with Cauchy data $(0,f)$. Then by section~\ref{sec_kernel}, $R_\gamma f$ will only have  singularities on $T^*\gamma$ near points defined by the line through $(x,\xi)$ which lie over $(t_{-1},p_{-1})$ and $(t_1,p_1)$, where $t_{\pm1}$ are the arrival times, see Figure~\ref{fig:sar4}. To cancel them, we chose $g_1$ with singularities near $(x_{-1},\xi_{-1})$ and $(x_1,\xi_1)$, see Figure~\ref{fig:sar4} again, unitarily related to the singularity of $f$ near $(x,\xi)$, see Theorem~\ref{thm1}. Then $u_0+u_1$, where $u_1$ is the solution with Cauchy data $(0,-g_1)$, will have no singularities near the points mentioned above which  project to $(t_{-1},p_{-1})$ and $(t_1,p_1)$. 
On the other hand, $u_1$ will  cause new singularities at points above $(t_{\pm2},p_{\pm2})$; see Figure~\ref{fig:sar4} where only $p_2$ is shown. We then construct $g_2$ and a related $u_2$ that would cancel them, etc. After a finite number of steps, the time component of the points $(t_{\pm k},p_{\pm k})$ above which  we have a singularity, will exceed $T$, and then we stop' and set $g=g_1+g_2+\dots$. Then we use a microlocal partition of unity to construct $g$ so that $f-g$ would have  the required properties without the assumption on $\WF(f)$.

To prove the general case (i.e., to take $T=\infty$ above), let $g_k$ (the subscript $k$ now has a different meaning) be the  distribution corresponding to $T=n$. Then $g_k-g_m = (f-g_m)-f-g_k$ has a circular transform smooth on $(0,\min(k,m))\times \gamma$, and $g_k-g_m =0$ in $\Omega$. The only possible singularities of that distribution could be those with the property that the line through each one of them intersects $\gamma$ transversely; then that singularity will leave a trace on $\gamma$. This implies that there are no singularities with travel time to $\gamma$ less than $\min(k,m)$. Therefore, on some ball centered at the origin of radius $\min(k,m)-C$, the distribution  $g_k$ coincides with $g_m$ up to a smooth function. Then we can easily construct $g$ as a ``limit'' of $g_k$ with a partition of unity, and this $g$ would have the property $R_\gamma(f-g)\in C^\infty$.

\begin{remark}\label{remark_TAT}  The main results in this paper are also related to the thermoacoustic/photoacoustic model with sources inside and outside $\Omega$. The wave equation then is the underlying model and there is no need of the operator $A$. Theorem~\ref{thm1} and  Theorem~\ref{thm4} then prove non-uniqueness of recovery of $\WF(f)$ as singularities of the data, with partial or full  measurements. Theorem~\ref{thm3} proves that this is actually possible if $\singsupp f$ is contained in a fixed compact set. The recovery is given by time reversal with $T$ as in \r{1.7}. The only formal difference is that in TAT, the wave equation is solved with Cauchy data $(f,0)$ at $t=0$ instead of $(0,f)$; and the time reversal operator, see \r{go2} and \r{1.8'} does not contain $\partial_t$. 

\end{remark}

\section{The 3D case: Recovery of the singularities  from integrals over spheres centered on a surface.}  \label{sec_3D} 
Let $\Gamma$ be a given smooth (relatively open) surface in $\R^3$. Let 
\be{3.1}
R_\Gamma f(r,p) = \int_{|x-p|=r} f(x) \,\d S_x,\quad r>0,  p\in\Gamma,
\ee
where $\d S_x$ is the Euclidean surface measure on the sphere $|x-p|=r$. We show below that the results of the previous section generalize easily to this case as well.

We assume again that $f\in \mathcal{E}'(\R^3)$ is supported away from $\Gamma$, and that for any  $(x,\xi)\in \WF(f)$, the line through $(x,\xi)$ hits $\Gamma$ once only, transversely. 
The main notions in section~\ref{sec2} are defined in the same way with a few minor and obvious modifications. In \r{1.W} and in the definitions \r{go1b}, \r{go1c} of $\mathcal{C}_{L,R}$ we need to replace $\dot\gamma\cdot\xi$ by the projection of $\xi$ onto the boundary,  i.e., onto $T_p^*\Gamma$, where $p\in \Gamma$ is the point where the line through $(x,\xi)$ hits $\Gamma$. 

In this case, $R_\Gamma f$  is more directly related to the solution of the wave equation; indeed
\[
u(t,x) = \frac1{4\pi t}R_\Gamma f(t,x)
\]
is the solution of the wave equation in the whole space with Cauchy data $(0,f)$ at $t=0$   restricted to $\R_+\times\Gamma$. 
Then $\Lambda = (4\pi t)^{-1}R_\Gamma$, compare with \r{1.3a}. Multiplication by $(4\pi t)^{-1}$ is, of course, an elliptic \PDO\ for $t\not=0$ (which is implied by our assumptions), and we get that Theorem~\ref{thm1} applies to this case, as well. In particular, we get that microlocally, we cannot distinguish between sources inside and outside the domain $\Omega$ occupied by the `patient's body'' in thermoacoustic tomography. If the external sources have compactly supported perturbations, then we can, and time reversal reconstruct the singularities for a large enough time $T$ such that each singularity coming from outside would exit before time $t$. 
This has been observed numerically in \cite{KuchmentKun08}.  

Finally, we remark that in applications to thermoacoustic tomography, the wave equation point of view is the natural one, actually. Then those results extend  to variable speeds using the analysis in \cite{SU-thermo}.


\end{document}